\documentclass[a4paper,12pt]{article}

\voffset=-20mm \textheight=240mm
\hoffset=-8mm \textwidth=155mm

\usepackage[utf8]{inputenc}
\usepackage[english]{babel}
\usepackage{amsmath, amssymb, amsthm}
\usepackage{enumerate}
\usepackage[colorlinks=true,linkcolor=blue,citecolor=blue]{hyperref}

\usepackage{tikz}
\usetikzlibrary{fadings}
\tikzfading[name=fade out,
            inner color=transparent!0,
            outer color=transparent!100]
\usetikzlibrary{calc,arrows,decorations.pathreplacing,fadings,3d,positioning}

\title{Problems concerning \\ Diophantine exponents of lattices.
       \thanks{ This research was supported in part by RSF grant 14-11-00433}}
\author{Oleg\,N.\,German}
\date{}

\theoremstyle{definition}
\newtheorem{definition}{Definition}

\newtheorem*{notation*}{Notation}

\theoremstyle{remark}

\newtheorem*{remark*}{Remark}

\theoremstyle{plain}
\newtheorem{theorem}{Theorem}

\newtheorem{proposition}{Proposition}

\newtheorem{problem}{Problem}

\newtheorem*{statement*}{Statement}
\newtheorem*{corollary*}{Corollary}

\DeclareMathOperator{\conv}{conv}

\renewcommand{\phi}{\varphi}

\renewcommand{\vec}[1]{\mathbf{#1}}
\renewcommand{\geq}{\geqslant}
\renewcommand{\leq}{\leqslant}

\newcommand{\e}{\varepsilon}

\newcommand{\R}{\mathbb{R}}
\newcommand{\Z}{\mathbb{Z}}
\newcommand{\Q}{\mathbb{Q}}
\newcommand{\N}{\mathbb{N}}

\newcommand{\Complex}{\mathbb{C}}

\newcommand{\La}{\Lambda}

\newcommand{\bpsi}{\underline{\psi}}
\newcommand{\apsi}{\overline{\psi}}
\newcommand{\bPsi}{\underline{\Psi}}
\newcommand{\aPsi}{\overline{\Psi}}

\newcommand{\cB}{\mathcal{B}}
\newcommand{\cC}{\mathcal{C}}
\newcommand{\cD}{\mathcal{D}}

\newcommand{\cF}{\mathcal{F}}

\newcommand{\cK}{\mathcal{K}}
\newcommand{\cL}{\mathcal{L}}

\newcommand{\cO}{\mathcal{O}}

\newcommand{\cV}{\mathcal{V}}

\newcommand{\SL}{\textup{SL}}

\newcommand{\starv}{\textup{St}_{\vec v}}

\begin{document}

\maketitle

\begin{abstract}
  In this paper we give a survey of what is currently known about Diophantine exponents of lattices and propose several problems.
\end{abstract}


\section{Introduction}

Let $L_1,\ldots,L_n$ be $n$ linearly independent linear forms in $\R^d$, $n\leq d$. One of the basic questions in Diophantine approximation is how small the $n$-tuple
\[\big(L_1(\vec z),\ldots,L_n(\vec z)\big)\]
can be for large $\vec z\in\Z^d$. There are two classical ways to measure the ``size'' of this $n$-tuple. The first one is to consider an arbitrary norm, say, the sup-norm, and the second one is to consider the product of the absolute values of the entries. Then, the question is how fast this quantity can tend to zero with the growth of the ``size'' of $\vec z$.

For $n=1$ the ``norm'' approach gives us the problem of approximating zero with the values of a given linear form at integer points. For $n=d-1$ it leads to the dual problem of simultaneous approximation of $d-1$ real numbers with rationals having same denominator.

The multiplicative approach leads to a variety of more complicated problems, of which probably the most famous one is the Littlewood conjecture, which claims that, given two forms $L_1$, $L_2$ in three variables with coefficients written in the rows of
\[ \begin{pmatrix}
     \theta_1 & 1 & 0      \\
     \theta_2 & 0 & 1
   \end{pmatrix}, \]
for each $\e>0$ the inequality
\[ \prod_{i=1,2}|L_i(\vec z)|\leq\e z_1^{-1} \]
admits infinitely many solutions in $\vec z=(z_1,z_2,z_3)\in\Z^3$, $z_1\neq0$.

In this paper we are interested in the case $n=d$. Then, if $\vec z$ is large, the ``norm'' approach cannot give small values of $\big(L_1(\vec z),\ldots,L_d(\vec z)\big)$, so, we cannot properly talk about Diophantine approximation in this sense. But the multiplicative approach brings us to an area that is rich with very natural unsolved problems concerning dynamics in the space of unimodular lattices. Since $L_1,\ldots,L_d$ are assumed to be linearly independent, the set
\begin{equation} \label{eq:lattice}
  \La=\Big\{\big(L_1(\vec z),\ldots,L_d(\vec z)\big)\,\Big|\,\vec z\in\Z^d \Big\}
\end{equation}
is a lattice of rank $d$. Let us set for each $\vec x=(x_1,\ldots,x_d)\in\R^d$
\[\Pi(\vec x)=\prod_{1\leq i\leq d}|x_i|^{1/d}.\]
The main purpose of this paper is to give a survey of what is currently known about how fast $\Pi(\vec x)$ can decay as $\vec x$ ranges through the points of $\La$, and to formulate some questions that need answering.

Probably the simplest quantity that characterizes the asymptotic behaviour of $\Pi(\vec x)$ is the \emph{Diophantine exponent} of $\La$.

\begin{definition} \label{d:lattice_exponent}
  Let $\La$ be a lattice of full rank in $\R^d$.
  The \emph{Diophantine exponent} of $\La$ is defined as
  \[\omega(\La)=\sup\Big\{\gamma\in\R\ \Big|\,\Pi(\vec x)\leq|\vec x|^{-\gamma}\text{ for infinitely many }\vec x\in\La \Big\},\]
  where $|\cdot|$ is the sup-norm.
\end{definition}

In other words,
\begin{equation} \label{eq:omega_as_limsup}
  \omega(\La)=
  \limsup_{\substack{\vec v\in\La \\ |\vec v|\to\infty}}\frac{\log\big(\Pi(\vec v)^{-1}\big)}{\log|\vec v|}\,.
\end{equation}
Clearly, Definition \ref{d:lattice_exponent} does not depend on the choice of the norm.

It follows from Minkowski's convex body theorem that
\[\omega(\La)\geq0.\]
This trivial bound is sharp. For instance, Schmidt's subspace theorem provides a rich family of lattices having zero exponent. The following statement can be found in \cite{skriganov_1998}, \cite{german_2017}.

\begin{theorem}[Corollary to Schmidt's subspace theorem] \label{t:schmidt_finitely_many_points}
  Let $L_1(\vec z),\ldots,L_d(\vec z)$ be linearly independent linear forms in $d$ variables with algebraic coefficients. Suppose that for each $k$-tuple $(i_1,\ldots,i_k)$, $1\leq i_1<\ldots<i_k\leq d$, $1\leq k\leq d$,
  the coefficients of the multivector
  \[ L_{i_1}\wedge\ldots\wedge L_{i_k} \]
  are linearly independent over $\Q$.
  Then for each $\e>0$ there are only finitely many points $\vec z\in\Z^d$ satisfying
  \[ \prod_{1\leq i\leq d}\big|L_i(\vec z)\big|<|\vec z|^{-\e}. \]
\end{theorem}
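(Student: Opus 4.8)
The plan is to feed the Subspace theorem into an induction on dimension. Cut the solution set down with the Subspace theorem, and then prove that each resulting proper rational subspace carries only finitely many solutions. Concretely, let $P(k)$ assert that for every rational subspace $V\subseteq\Q^d$ with $\dim V=k$ and every $\e>0$ only finitely many $\vec z\in V\cap\Z^d$ satisfy $\prod_i|L_i(\vec z)|<|\vec z|^{-\e}$. The theorem is $P(d)$ (take $V=\Q^d$, where the Subspace theorem applies to $L_1,\ldots,L_d$ directly), and I would establish $P(1),\ldots,P(d)$ by strong induction, invoking the Subspace theorem at each stage to pass from dimension $k$ to a finite list of subspaces of smaller dimension.

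The algebraic heart is the following consequence of the hypothesis. Fix a rational $V$ of dimension $k$, choose a basis $\vec m_1,\ldots,\vec m_k$ of $V\cap\Z^d$ as columns of an integer matrix $M$, write $\vec z=M\vec w$ with $\vec w\in\Z^k$, and set $M_i:=L_i\circ M$ for the restricted forms. For any $k$-subset $\{i_1<\cdots<i_k\}$ the determinant of the coefficient matrix of $M_{i_1},\ldots,M_{i_k}$ equals the pairing $\langle L_{i_1}\wedge\cdots\wedge L_{i_k},\,\vec m_1\wedge\cdots\wedge\vec m_k\rangle$, that is, a rational combination (via the integer Plücker coordinates of $V$, not all zero) of the coefficients of the multivector $L_{i_1}\wedge\cdots\wedge L_{i_k}$. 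Since those coefficients are $\Q$-linearly independent by hypothesis, the pairing is nonzero; hence on every rational $k$-dimensional subspace any $k$ of the restricted forms are linearly independent. The case $k=1$ says no $L_i$ vanishes at a nonzero rational point, which settles the base case $P(1)$: on a rational line $\vec z=w\vec z_0$ one has $\prod_i|L_i(\vec z)|=|w|^d\prod_i|L_i(\vec z_0)|$ with $\prod_i|L_i(\vec z_0)|\neq0$, eventually exceeding $|\vec z|^{-\e}\asymp|w|^{-\e}$.

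For the inductive step, on $V$ of dimension $k\geq2$ the restricted forms span $V^\ast$, so $|\vec w|\asymp|\vec z|$ and $\prod_{i=1}^{d}|M_i(\vec w)|<C|\vec w|^{-\e}$. Ordering $|M_{\sigma(1)}(\vec w)|\geq\cdots\geq|M_{\sigma(d)}(\vec w)|$ and using that the geometric mean of the $k$ smallest of $d$ nonnegative numbers does not exceed the geometric mean of all $d$, the $k$ smallest restricted forms satisfy $\prod_{\text{smallest }k}|M_{\sigma(i)}(\vec w)|\leq\big(\prod_{i=1}^d|M_i(\vec w)|\big)^{k/d}<C^{k/d}|\vec w|^{-\e k/d}$. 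By the algebraic fact these $k$ forms are linearly independent with algebraic coefficients, so the Subspace theorem applies in dimension $k$. Splitting over the $\binom{d}{k}$ possible index sets $T$ that can realize the $k$ smallest values, the corresponding $\vec w$ lie in finitely many proper rational subspaces of $\Q^k$; pulling back gives finitely many rational subspaces of $V$ of dimension $\leq k-1$, and $P(1),\ldots,P(k-1)$ finish the step.

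The main obstacle is precisely what the multivector hypothesis is built to remove: replacing the product of all $d$ forms by a product of only $k$ \emph{linearly independent} forms. One cannot merely discard $d-k$ factors, since several restricted forms may be simultaneously small; the geometric-mean estimate handles the size bookkeeping, but it is useful only because the selected $k$ forms are guaranteed to be independent, and that genericity on \emph{every} rational subspace is exactly what the $\Q$-linear independence of the wedge coefficients supplies. The remaining points — comparability of $|\vec z|$ and $|\vec w|$ under the fixed parametrization, absorbing the constant $C$ into an arbitrarily small loss of exponent, and discarding boundedly many small $\vec w$ — are routine.
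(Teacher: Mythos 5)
Your argument is correct, and it is essentially the standard one: the paper itself gives no proof of Theorem \ref{t:schmidt_finitely_many_points}, only citations to \cite{skriganov_1998} and \cite{german_2017}, where the proof proceeds exactly by this descent on rational subspaces --- Cauchy--Binet turns the wedge hypothesis into linear independence of any $k$ restricted forms on any rational $k$-dimensional subspace, the geometric-mean selection of the $k$ smallest factors feeds the subspace theorem in dimension $k$, and strong induction finishes. The details you defer (comparability of $|\vec z|$ and $|\vec w|$, absorbing constants into the exponent, the base case via non-vanishing of each $L_i$ at nonzero integer points) are indeed routine and check out.
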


Thus, in the case of algebraic coefficients satisfying the independence condition mentioned in Theorem \ref{t:schmidt_finitely_many_points} we have $\omega(\La)=0$ for $\La$ defined by \eqref{eq:lattice}. It appears that, same as with real numbers, such an algebraic lattice behaves as an average unimodular lattice. Denote by $\cL_d$ the space of full rank lattices in $\R^d$ of covolume $1$,
\[\cL_d\cong\SL_d(\R)/\SL_d(\Z).\]
It was shown by Skriganov \cite{skriganov_1998} that for almost every $\La\in\cL_d$ we have
\[\Pi(\vec x)^d\gg_{\La,\e}(\log(1+|\vec x|))^{1-d-\e}\quad\text{ for }\vec x\in\La\backslash\{\vec 0\}.\]
Thus, for almost every $\La\in\cL_d$ we have $\omega(\La)=0$. Later on D.\,Kleinbock and G.\,Margulis \cite{kleinbock_margulis_1999} completed Skriganov's theorem to a proper multidimensional multiplicative generalization of Khintchine’s theorem.


\begin{theorem}[Kleinbock, Margulis, 1999] \label{t:kleinbock_margulis}
  Let ${f}:[1,+\infty)\to(0,+\infty)$ be a non-increasing continuous function. Then for almost every (resp. almost no) $\La\in\cL_d$ there are infinitely many $\vec x\in\La$ such that
  \[\Pi(\vec x)^d\leq|\vec x|\cdot{f}(|\vec x|),\]
  provided the integral
  \[\int_1^\infty(\log x)^{d-2}{f}(x)dx\]
  diverges (resp. converges).
\end{theorem}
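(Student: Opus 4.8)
The plan is to translate the assertion into the dynamics of the diagonal flow on $\cL_d$ and then prove a zero--one law by a Borel--Cantelli argument, the convergence half being elementary and the divergence half resting on quantitative mixing. Write $\mu$ for the $\SL_d(\R)$--invariant probability measure on $\cL_d$ and let $A=\{g_{\vec t}=\mathrm{diag}(e^{t_1},\ldots,e^{t_d}) : t_1+\cdots+t_d=0\}\cong\R^{d-1}$ be the full diagonal Cartan subgroup. The first step is a Dani-type correspondence rewriting $\Pi(\vec x)^d=\prod_i|x_i|\le|\vec x|f(|\vec x|)$ in dynamical terms: given $\vec x\in\La$ one chooses $\vec t$ so that $g_{\vec t}$ balances the coordinates, $e^{t_i}|x_i|\asymp\rho$ with $\rho=\big(\prod_i|x_i|\big)^{1/d}$, whereupon $g_{\vec t}\La$ acquires the nonzero vector $g_{\vec t}\vec x$ of sup-norm $\asymp\rho$; conversely a short vector of $g_{\vec t}\La$ produces a solution $\vec x$. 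Thus ``$\La$ has infinitely many solutions'' becomes ``the orbit $\{g_{\vec t}\La : \vec t\in A\}$ enters the shrinking cusp neighbourhoods
\[ \Delta_r=\big\{\La'\in\cL_d : \La'\text{ contains a nonzero vector of sup-norm }\le r\big\} \]
infinitely often as $\vec t\to\infty$ in $A$, the radius being tied to $f$ by $r^d\asymp|\vec x|f(|\vec x|)$''.

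Second, I would assemble the two quantitative inputs. (i) A measure estimate for the cusp: Minkowski's theorem gives $\mu(\Delta_r)>0$, and a Siegel-transform computation yields $\mu(\Delta_r)\asymp r^d$ as $r\to0$. (ii) Siegel's mean value formula
\[ \int_{\cL_d}\#\big\{\vec x\in\La\setminus\{\vec 0\} : \vec x\in S\big\}\,d\mu(\La)=\vol(S)\qquad(S\subset\R^d\text{ bounded}), \]
used to pass between lattice-point counts and volumes.

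Third, the convergence case (almost no $\La$). Here one needs only the first Borel--Cantelli lemma: cover the range of $|\vec x|$ dyadically and, at each scale, bound the measure of the set of lattices admitting a solution by $\mu(\Delta_r)\asymp r^d$. The decisive point is that integrating this cusp-excursion rate over the chamber $A\cong\R^{d-1}$ reproduces exactly the integral in the statement. In polar coordinates on $A$ the volume element is $R^{d-2}\,dR\,d\sigma$ with radial magnitude $R=|\vec t|\asymp\log|\vec x|$, while $\mu(\Delta_r)\asymp r^d\asymp|\vec x|f(|\vec x|)$; substituting $x=e^{R}$ turns $\int R^{d-2}\,|\vec x|f(|\vec x|)\,|\vec x|^{-1}\,dR$ into $\int(\log x)^{d-2}f(x)\,dx$, the polar Jacobian $R^{d-2}$ being precisely what supplies the exponent $d-2$ (and explaining why a naive dyadic-shell count, which overcounts by one power of $\log$, is insufficient). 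When the integral converges these measures are summable, so almost every $\La$ has only finitely many solutions.

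Finally, the divergence case (almost every $\La$), which I expect to be the main obstacle. The bound $\mu(\{\La : \text{solution at scale }T\})\le\text{expected count}$ is too lossy, so the first Borel--Cantelli lemma no longer gives a lower bound and one must establish a genuine zero--one law via a \emph{dynamical} Borel--Cantelli lemma: the events $\{g_{\vec t}\La\in\Delta_r\}$ at well-separated times $\vec t$ must be shown asymptotically independent. This independence comes from exponential decay of matrix coefficients of $L^2(\cL_d)$ under $\SL_d(\R)$ (Howe--Moore together with the spectral gap), which yields effective mixing of the $A$-action and hence quasi-orthogonality of the functions $\mathbf{1}_{\Delta_r}\circ g_{\vec t}$; divergence of $\int(\log x)^{d-2}f(x)\,dx$ then forces infinitely many returns to the cusp for $\mu$-almost every $\La$. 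The genuinely delicate issues are that the flow is higher-rank and multi-parameter — so the targets are neither nested nor indexed by a single time, and the mixing estimate must be made uniform over the chamber — and that the cusp is noncompact, so mixing must be applied to the unbounded indicators $\mathbf{1}_{\Delta_r}$ via truncation controlled by the measure estimate (i). Handling these two points, and thereby pinning down the exact exponent $d-2$, is where the real work lies.
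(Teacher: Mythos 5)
This statement is quoted in the survey as a known result of Kleinbock and Margulis, with a citation to \cite{kleinbock_margulis_1999} and no proof, so there is no in-paper argument to compare against; the relevant benchmark is the original proof, and your outline does faithfully reconstruct its architecture (the Dani-type correspondence between solutions of $\Pi(\vec x)^d\leq|\vec x|f(|\vec x|)$ and visits of the multi-parameter diagonal orbit to shrinking cusp neighbourhoods, the Siegel-transform estimate $\mu(\Delta_r)\asymp r^d$, and the two Borel--Cantelli halves). The convergence half is essentially complete modulo bookkeeping, though note two points: first, your displayed integrand carries a spurious factor $|\vec x|^{-1}$ --- with $\mu(\Delta_r)\asymp xf(x)$ and $dR=dx/x$ the correct chain is $\int R^{d-2}\,xf(x)\,dR=\int(\log x)^{d-2}f(x)\,dx$, and as written your expression produces $\int(\log x)^{d-2}f(x)\,x^{-1}dx$ instead; second, the correspondence assigns to a solution $\vec x$ the time $\vec t$ with $t_i=\log\big(\Pi(\vec x)/|x_i|\big)$, and when the coordinates of $\vec x$ are very unbalanced this $\vec t$ lies far outside the ball of radius $\asymp\log|\vec x|$, so one must either justify restricting to a bounded sub-chamber (using that $f$ is non-increasing) or account for the extra times; this is exactly the reduction Kleinbock and Margulis carry out and it cannot be waved away if the upper bound is to be summable.

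The genuine gap is the divergence half, which you correctly identify as the heart of the theorem and then do not prove. Everything you list there --- quasi-independence of the events $\{g_{\vec t}\La\in\Delta_{r(\vec t)}\}$ from exponential decay of correlations, uniformity of the mixing estimate over the $(d-1)$-dimensional chamber rather than a single time direction, and the truncation/smoothing needed because $\mathbf{1}_{\Delta_r}$ is an indicator of a neighbourhood of the cusp to which raw $L^2$-mixing does not directly apply --- is precisely the content of the key Borel--Cantelli lemma in \cite{kleinbock_margulis_1999} (their Theorem 4.3 and the surrounding Sobolev-norm estimates), not a routine afterthought. As it stands your proposal proves ``almost no'' under convergence and only asserts a plan for ``almost every'' under divergence; to close it you would need to state and prove the quasi-independence inequality $\mu(A_{\vec s}\cap A_{\vec t})\leq\mu(A_{\vec s})\mu(A_{\vec t})+Ce^{-\gamma\|\vec s-\vec t\|}$ for suitable regularizations of the targets and then run the Paley--Zygmund/second-moment argument over the lattice $\Z^{d-1}$ in the chamber.
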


Among the lattices with zero Diophantine exponent there is a class of lattices that can be viewed as a multidimensional multiplicative generalization of badly approximable numbers. Those are the lattices with positive \emph{norm minimum}
\[N(\La)=\inf_{\vec x\in\La\backslash\{\vec 0\}}\Pi(\vec x)^d.\]
According to Mahler's compactness criterion (see \cite{cassels_GN}) $N(\La)$ is positive if and only if the orbit $\cD_d\La$ of $\La$ under the action of the group of diagonal matrices
\begin{equation} \label{eq:diagonal_matrices}
  \cD_d=\Big\{\textup{diag}(e^{t_1},\ldots,e^{t_d})\,\Big|\,t_1,\ldots,t_d\in\R,\ \sum_{i=1}^dt_i=0 \Big\}
\end{equation}
is relatively compact. From this point of view $\omega(\La)$ is responsible for how fast $\La$ can leave any given compact set under the action of $\cD_d$.

It is an intriguing question which lattices have positive norm minimum. For instance, if $E$ is a totally real algebraic extension of $\Q$ of degree $d$, $\sigma_1,\ldots,\sigma_d$ are its embeddings into $\Complex$ (actually into $\R$), and $\theta_1,\ldots,\theta_d$ is a basis of $E$ over $\Q$, then it can be easily shown with the help of Liouville-type argument that $N(\La)>0$ for
\[\La=
  \begin{pmatrix}
    \sigma_1(\theta_1) & \sigma_1(\theta_2) & \cdots & \sigma_1(\theta_d) \\
    \sigma_2(\theta_1) & \sigma_2(\theta_2) & \cdots & \sigma_2(\theta_d) \\
    \vdots             & \vdots             & \ddots & \vdots             \\
    \sigma_d(\theta_1) & \sigma_d(\theta_2) & \cdots & \sigma_d(\theta_d)
  \end{pmatrix}\Z^d.\]
It is a famous conjecture, sometimes referred to as Margulis--Cassels--Swinnerton-Dyer conjecture, that for $d\geq3$, up to homotheties and the action of $\cD_d$, these are the only lattices with positive norm minimum. Dynamically, it states that for $d\geq3$ an orbit $\cD_d\La$ is compact if and only if it is relatively compact (see \cite{margulis_2000}, \cite{cassels_swinnerton_dyer}). It is well known (see \cite{cassels_swinnerton_dyer}) that the three-dimensional Margulis--Cassels--Swinnerton-Dyer conjecture implies the Littlewood conjecture, and that the set of counterexamples has zero Hausdorff dimension (see \cite{einsiedler_katok_lindenstrauss}).

The rest of the paper is organized as follows. Section \ref{sec:2dim} is devoted to the two-dimensional case. We show that in this case lattice exponents are closely connected to the measure of irrationality of a real number, which can be easily dealt with due to regular continued fractions, a powerful tool that is available when $d=2$.
In Section \ref{sec:spectra} we are concerned about the set of values lattice exponents can attain. In Section \ref{sec:multicontractions} we generalize to the multidimensional case the connection between the measure of irrationality and the growth of partial quotients, taking Klein polyhedra as a multidimensional analogue of continued fractions. Finally, in Section \ref{sec:parametric} we discuss a possible way to generalize Schmidt--Summerer's parametric geometry of numbers to our setting.

\section{Two-dimensional background} \label{sec:2dim}

\subsection{Lattice exponents and irrationality measure} \label{sec:exponents_vs_irrationality_measure}

Let $\theta_1$, $\theta_2$ be distinct real numbers. Consider linear forms $L_1$, $L_2$ in two variables with coefficients written in the rows of
\begin{equation} \label{eq:A_theta}
  A=
  \begin{pmatrix}
    \theta_1 & -1 \\
    \theta_2 & -1
  \end{pmatrix}
\end{equation}
and set
\begin{equation} \label{eq:La_theta}
  \La_{\theta_1,\theta_2}=A\Z^2=
  \Big\{\big(L_1(\vec z),L_2(\vec z)\big)\,\Big|\,\vec z\in\Z^2 \Big\}.
\end{equation}
Then for each $\,\vec x=\big(L_1(\vec z),L_2(\vec z)\big)\in\La$, where $\ \vec z=(q,p)\in\Z^2$, we have
\begin{equation} \label{eq:Pi_vs_q_and_p}
  \Pi(\vec x)^2=|L_1(\vec z)|\cdot|L_2(\vec z)|=|q\theta_1-p|\cdot|q\theta_2-p|
\end{equation}
and
\begin{equation} \label{eq:x_z_q_are_same}
  |\vec x|\asymp|L_i(\vec z)|\asymp|\vec z|\asymp|q|
  \ \text{ whenever }\ |L_j(\vec z)|\leq1,\ i\neq j.
\end{equation}
Hence
\begin{equation} \label{eq:omega_vs_mu}
  \omega(\La_{\theta_1,\theta_2})=\frac12\max\big(\mu(\theta_1),\mu(\theta_2)\big)-1,
\end{equation}
where
\[\mu(\theta)=\sup\Big\{\gamma\in\R\ \Big|\,\big|\theta-p/q\big|\leq|q|^{-\gamma}\text{ admits $\infty$ solutions in }(q,p)\in\Z^2 \Big\}\]
is the \emph{measure of irrationality} of a number $\theta$.

Thus, in the two-dimensional case lattice exponents simply provide another viewpoint at irrationality measure. Particularly, results concerning irrationality measure can be reformulated in terms of lattice exponents. For instance, 
the Jarn\'ik--Besicovitch theorem turns into

\begin{theorem}[Reformulation of the Jarn\'ik--Besicovitch theorem] \label{t:jarnik_besicovitch_for_lattices}
  \[\dim_\textup{H}\Big\{\Lambda\in\mathcal L_2\,\Big|\,\omega(\Lambda)\geq\lambda \Big\}=\frac{\lambda+2}{\lambda+1}\,.\]
\end{theorem}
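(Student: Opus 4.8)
The plan is to push the classical Jarn\'ik--Besicovitch theorem through the dictionary \eqref{eq:omega_vs_mu} between two-dimensional lattice exponents and irrationality measures. First I would use \eqref{eq:omega_vs_mu} to rewrite the defining condition: for a lattice $\La_{\theta_1,\theta_2}$ one has $\omega(\La_{\theta_1,\theta_2})\geq\lambda$ precisely when $\max\big(\mu(\theta_1),\mu(\theta_2)\big)\geq 2(\lambda+1)$. Setting $\tau=2(\lambda+1)$ and $E_\tau=\{\theta\in\R\mid\mu(\theta)\geq\tau\}$, the set under consideration corresponds, in the $(\theta_1,\theta_2)$-plane, to the union $(E_\tau\times\R)\cup(\R\times E_\tau)$. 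Note that $\tau\geq2$ is exactly the range $\lambda\geq0$ in which we work.

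Second, I would invoke Jarn\'ik--Besicovitch in the form $\dim_\textup{H}E_\tau=2/\tau$ for $\tau\geq2$. Taking a product with a Euclidean line adds $1$ to the Hausdorff dimension, and the dimension of a finite union is the maximum of the dimensions; hence the planar set above has Hausdorff dimension
\[\frac{2}{\tau}+1=\frac{1}{\lambda+1}+1=\frac{\lambda+2}{\lambda+1},\]
which is the asserted value. In practice one proves the two inequalities separately: the upper bound by transporting an efficient cover of $E_\tau$ (times the complementary line) coming from the easy half of Jarn\'ik--Besicovitch, and the lower bound by transporting a Frostman measure on $E_\tau$ and multiplying by Lebesgue measure on the complementary line.

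Third comes the passage from the $(\theta_1,\theta_2)$-plane back to $\cL_2$. The structural input is that $\omega$ is invariant under the diagonal action of $\cD_2$ (and under homotheties), and that by \eqref{eq:omega_vs_mu} it is governed entirely by the pair of ``slopes'' $(\theta_1,\theta_2)$, which are themselves $\cD_2$-invariant. Thus the assignment $(\theta_1,\theta_2)\mapsto\La_{\theta_1,\theta_2}$ is a smooth parametrization along which $\omega$ is constant on the complementary $\cD_2$-orbits, and Hausdorff dimension---being a local bi-Lipschitz invariant and countably stable---transfers from the plane to $\cL_2$.

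I expect the genuine obstacle to lie precisely in this last transfer: one must verify that the parametrization is bi-Lipschitz on the pieces that matter and, above all, account correctly for the $\cD_2$-invariant direction, so that it is the planar value $\tfrac{\lambda+2}{\lambda+1}$---and not that value shifted by the dimension of the orbit---that records the dimension attached to $\cL_2$. Once this bookkeeping is pinned down, the theorem is an immediate corollary of \eqref{eq:omega_vs_mu} and the classical Jarn\'ik--Besicovitch theorem; the arithmetic in the first two steps is entirely routine.
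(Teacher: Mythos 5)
The paper offers no proof of Theorem \ref{t:jarnik_besicovitch_for_lattices}: it is presented as an immediate reformulation of the classical Jarn\'ik--Besicovitch theorem via \eqref{eq:omega_vs_mu}, which is precisely your first two steps. Those steps are correct: with $\tau=2(\lambda+1)\geq2$, the parameters $(\theta_1,\theta_2)$ for which $\omega(\La_{\theta_1,\theta_2})\geq\lambda$ form the set $(E_\tau\times\R)\cup(\R\times E_\tau)$ minus the harmless line $\theta_1=\theta_2$, and its Hausdorff dimension is $2/\tau+1=\frac{\lambda+2}{\lambda+1}$. (For the upper bound in the product step you should quote $\dim_\textup{H}(A\times B)\leq\dim_\textup{H}A+\dim_\textup{P}B$ with the packing dimension of the line equal to $1$, since $E_\tau$ is dense and the naive box-counting bound is useless; your Frostman-measure argument handles the lower bound.)

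The genuine gap is exactly where you suspect it, in your third step, and the bookkeeping cannot be made to come out the way you want. The normalized family $(\theta_1,\theta_2)\mapsto\La_{\theta_1,\theta_2}$ is a two-dimensional section of the three-dimensional space $\cL_2$, transverse to the one-dimensional $\cD_2$-orbits, and locally the saturation of a set $S$ in this section under the flow is bi-Lipschitz to $S\times(-\e,\e)$; hence the corresponding subset of $\cL_2$ has Hausdorff dimension $\dim_\textup{H}S+1$, not $\dim_\textup{H}S$. The invariant direction does contribute. Two sanity checks confirm this: at $\lambda=0$ the set $\{\La\in\cL_2\mid\omega(\La)\geq0\}$ is all of $\cL_2$ by Minkowski's theorem, of dimension $3$ rather than $2$; and for every $\lambda$ the set contains all lattices possessing a nonzero point on a coordinate axis (for which $\omega=\infty$), a two-parameter family, so its dimension is at least $2$, which exceeds $\frac{\lambda+2}{\lambda+1}$ for every $\lambda>0$. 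The displayed formula therefore records the dimension of the set of parameters $(\theta_1,\theta_2)$ --- equivalently, of the slice transverse to the diagonal flow --- which is exactly what your first two steps compute, while the literal subset of $\cL_2$ has dimension $1+\frac{\lambda+2}{\lambda+1}$. You should either stop after your second step and state the result for the two-parameter family, or carry the extra $+1$ through; as written, your third step asserts a transfer that fails.
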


It is also worth mentioning the connection between the norm minimum of $\La_{\theta_1,\theta_2}$ and the property of $\theta_1,\theta_2$ to be badly approximable. It follows from \eqref{eq:Pi_vs_q_and_p}, \eqref{eq:x_z_q_are_same}.

\begin{proposition} \label{prop:norm_minimum_vs_badly_approximable}
  $N(\La_{\theta_1,\theta_2})>0$ if and only if both $\theta_1$ and $\theta_2$ are badly approximable.
\end{proposition}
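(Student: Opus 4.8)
The plan is to derive both implications directly from the product formula \eqref{eq:Pi_vs_q_and_p}, using that $\theta_1\neq\theta_2$ forces the two factors $|q\theta_1-p|$ and $|q\theta_2-p|$ to behave oppositely for large $|q|$. Indeed, their difference is $q(\theta_1-\theta_2)$, so by the triangle inequality their sum is at least $|q|\delta$ with $\delta:=|\theta_1-\theta_2|>0$, and at least one of them is therefore $\geq|q|\delta/2$. Recall that $\theta$ is badly approximable precisely when there is a constant $c>0$ with $|q\theta-p|\geq c/|q|$ for all $(q,p)\in\Z^2$, $q\neq0$.

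For the forward implication, assume $\theta_1,\theta_2$ are badly approximable with constants $c_1,c_2>0$, and take any nonzero $\vec x=(q\theta_1-p,q\theta_2-p)\in\La_{\theta_1,\theta_2}$. If $q=0$ then $\vec x=(-p,-p)$ with $p\neq0$, so $\Pi(\vec x)^2=p^2\geq1$. If $q\neq0$ and, say, $|q\theta_1-p|\geq|q\theta_2-p|$, then the larger factor is $\geq|q|\delta/2$ while the smaller obeys $|q\theta_2-p|\geq c_2/|q|$, whence $\Pi(\vec x)^2\geq\tfrac12\delta c_2$; the opposite case yields $\tfrac12\delta c_1$. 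Hence $N(\La_{\theta_1,\theta_2})\geq\min\big(1,\tfrac12\delta\min(c_1,c_2)\big)>0$.

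For the converse I argue by contraposition, supposing, say, that $\theta_1$ is not badly approximable. If $\theta_1$ is rational, one picks $(q,p)$ with $q\theta_1-p=0$, and since $\theta_1\neq\theta_2$ the point $\vec x=(0,q(\theta_2-\theta_1))$ is nonzero with $\Pi(\vec x)^2=0$, so $N(\La_{\theta_1,\theta_2})=0$. Otherwise there is a sequence $(q_k,p_k)$, $q_k\neq0$, with $\e_k:=|q_k|\cdot|q_k\theta_1-p_k|\to0$; since $|q|\cdot\|q\theta_1\|$ stays bounded away from $0$ over any bounded range of $q$ (as $\theta_1$ is irrational), necessarily $|q_k|\to\infty$. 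Then $|q_k\theta_1-p_k|\leq1$ eventually, so \eqref{eq:x_z_q_are_same} gives $|q_k\theta_2-p_k|\asymp|q_k|$, and
\[\Pi(\vec x_k)^2=|q_k\theta_1-p_k|\cdot|q_k\theta_2-p_k|=\frac{\e_k}{|q_k|}\cdot|q_k\theta_2-p_k|\asymp\e_k\to0,\]
forcing $N(\La_{\theta_1,\theta_2})=0$.

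The substance of the argument is the dichotomy in the first paragraph, which is the elementary two-dimensional counterpart of the Liouville-type reasoning used for totally real fields: the separation $\theta_1\neq\theta_2$ guarantees that one coordinate of every $\vec x$ with small $|q\theta_i-p|$ is comparable to $|q|$, so a small product can only come from a genuinely good rational approximation to one of the $\theta_i$. The only points requiring care are the routine reductions in the converse—ensuring the failure of bad approximability is witnessed along $|q_k|\to\infty$ and separately disposing of the rational case—neither of which presents a real obstacle.
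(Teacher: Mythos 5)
Your proof is correct and is essentially the argument the paper intends: the paper proves this proposition by simply citing \eqref{eq:Pi_vs_q_and_p} and \eqref{eq:x_z_q_are_same}, and your dichotomy (one of the two factors $|q\theta_i-p|$ is always $\geq|q|\,|\theta_1-\theta_2|/2$) is exactly the content of \eqref{eq:x_z_q_are_same} spelled out. The extra care you take with the rational case and with forcing $|q_k|\to\infty$ in the contrapositive is welcome but routine, as you say.
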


\subsection{Irrationality measure and growth of partial quotients}

Given a real number $\theta$ and its continued fraction expansion $\theta=[a_0;a_1,a_2,\ldots]$, let $p_n/q_n$ denote its $n$-th convergent. Then, as is well known,
\begin{equation} \label{eq:mu_vs_partial_quotients}
  \mu(\theta)=2+\limsup_{n\to\infty}\frac{\log a_{n+1}}{\log q_n}\,.
\end{equation}
Thus, in the two-dimensional case, knowing \eqref{eq:omega_vs_mu} and \eqref{eq:mu_vs_partial_quotients}, we can easily construct lattices with any given nonnegative Diophantine exponent. For instance, given $\omega\geq0$, we can take as $\theta_1$ a badly approximable number, i.e. a number with bounded partial quotients, and define $\theta_2=[a_0;a_1,a_2,\ldots]$, say, by the recurrence relation
\[a_{n+1}=\big[q_n^{2\omega}\big].\]
Then by \eqref{eq:omega_vs_mu} and \eqref{eq:mu_vs_partial_quotients} we have $\omega(\La_{\theta_1,\theta_2})=\omega$.

\subsection{Klein polygons} \label{sec:klein_polygons}

There is a nice geometric interpretation of \eqref{eq:mu_vs_partial_quotients} in terms of lattice exponents and \emph{Klein polygons}.

Let $\theta_1$, $\theta_2$ be real numbers, as in Section \ref{sec:exponents_vs_irrationality_measure}, and let $L_1$, $L_2$, $\La_{\theta_1,\theta_2}$ be defined by \eqref{eq:A_theta} and \eqref{eq:La_theta}. Suppose for simplicity $\theta_1>1$, $-1<\theta_2<0$. This assumption slightly affects generality, but it helps to see the essence more clearly. Set
\[\cK_1=\conv\Big(\Big\{\vec z\in\Z^2\ \Big|\,L_1(\vec z)>0,\ L_2(\vec z)<0 \Big\}\Big),\]
\[\cK_2=\conv\Big(\Big\{\vec z\in\Z^2\ \Big|\,L_1(\vec z)<0,\ L_2(\vec z)<0 \Big\}\Big).\]
These convex hulls (see Fig.\ref{fig:KP_and_CF}) are called \emph{Klein polygons}.

\begin{figure}[h]
  \centering
  \begin{tikzpicture}[scale=1.4]
    \draw[very thin,color=gray,scale=1] (-3.8,-1.7) grid (4.8,55/8-0.2);

    \draw (1+0.4,-0.16) node[left,circle,inner sep=7pt,inner color=white,path fading=fade out]{$\vec v_{-2}$};
    \draw (-0.21,1-0.17) node[right,circle,inner sep=7pt,inner color=white,path fading=fade out]{$\vec v_{-1}$};
    \draw (-2.21,1-0.17) node[right,circle,inner sep=7pt,inner color=white,path fading=fade out]{$\vec v_{-3}$};

    \draw[color=black] plot[domain=-15/11:5] (\x, {11*\x/8}) node[above]{$y=\theta_1x$};
    \draw[color=black] plot[domain=-4:5] (\x, {-3*\x/8}) node[below]{$y=\theta_2x$};

    \draw (3+0.4,-1.16) node[left,circle,inner sep=7pt,inner color=white,path fading=fade out]{$\vec v_{-4}$};
    \draw (1+0.32,1.1) node[left,circle,inner sep=10pt,inner color=white,path fading=fade out]{$\vec v_0$};

    \fill[blue!10!,path fading=east]
        (3+1.8,4+1.8*7/5) -- (3,4) -- (1,1) -- (1,0) -- (3,-1) -- (3+1.8,-1-1.8*2/5) -- cycle;
    \fill[blue!10!,path fading=north]
        (2+2.8,3+2.8*4/3) -- (2,3) -- (0,1) -- (-2-1.8,3+2.8*4/3) -- cycle;
    \fill[blue!10!,path fading=west]
        (-2-1.8,3+2.8*4/3) -- (0,1) -- (-2,1) -- (-2-1.8,1+1.8/3) -- cycle;
    \fill[blue!10!,path fading=south]
        (-0.7,-1.7) -- (0,-1) -- (2,-1) -- (4.1,-1.7) -- cycle;
    \fill[blue!10!,path fading=west]
        (-3.8,-1.7) -- (-1,0) -- (-3,1) -- (-3.8,1+0.8*2/5) -- cycle;
    \fill[blue!10!,path fading=south]
        (-1-0.7*2/3,-1.7) -- (-1,-1) -- (-1,0) -- (-3.8,-1.7) -- cycle;

    \draw[color=blue] (3+1.8,4+1.8*7/5) -- (3,4) -- (1,1) -- (1,0) -- (3,-1) -- (3+1.8,-1-1.8*2/5);
    \draw[color=blue] (2+2.8,3+2.8*4/3) -- (2,3) -- (0,1) -- (-2,1) -- (-2-1.8,1+1.8/3);
    \draw[color=blue] (-0.7,-1.7) -- (0,-1) -- (2,-1) -- (4.1,-1.7);
    \draw[color=blue] (-1-0.7*2/3,-1.7) -- (-1,-1) -- (-1,0) -- (-3,1) -- (-3.8,1+0.8*2/5);

    \node[left,circle,inner sep=2pt,inner color=white] at (3-0.1,4.1) {};
    \node[circle,inner sep=3pt,inner color=white] at (2.19,3-0.11) {};
    \node[left,circle,inner sep=7pt,inner color=white,path fading=fade out] at (3+0.2,4.1) {$\vec v_2$};
    \node[right,circle,inner sep=7pt,inner color=white,path fading=fade out] at (2-0.22,3-0.13) {$\vec v_1$};

    \node[fill=blue,circle,inner sep=1.2pt] at (3,4) {};
    \node[fill=blue,circle,inner sep=1.2pt] at (1,1) {};
    \node[fill=blue,circle,inner sep=1.2pt] at (1,0) {};
    \node[fill=blue,circle,inner sep=1.2pt] at (3,-1) {};
    \node[fill=blue,circle,inner sep=1.2pt] at (2,3) {};
    \node[fill=blue,circle,inner sep=1.2pt] at (0,1) {};
    \node[fill=blue,circle,inner sep=1.2pt] at (-2,1) {};

    \node[fill=blue,circle,inner sep=1.2pt] at (-1,-1) {};
    \node[fill=blue,circle,inner sep=1.2pt] at (-1,0) {};
    \node[fill=blue,circle,inner sep=1.2pt] at (-3,1) {};
    \node[fill=blue,circle,inner sep=1.2pt] at (0,-1) {};
    \node[fill=blue,circle,inner sep=1.2pt] at (2,-1) {};

    \node[fill=blue,circle,inner sep=0.8pt] at (1,-1) {};
    \node[fill=blue,circle,inner sep=0.8pt] at (-1,1) {};
    \node[fill=blue,circle,inner sep=0.8pt] at (1,2) {};

    \node[right] at (1-0.03,0.5) {$a_0$}; 
    \node[right] at (2-2/13,2.5-3/13) {$a_2$};
    \node[right] at (3+1.2,4+1.2*7/5) {$a_4$};
    \node[right] at (2-0.06,-0.4) {$a_{-2}$};

    \node[above left] at (1.08,2-0.02) {$a_1$};
    \node[left] at (2+1.5*1.07,3+2*1.07) {$a_3$};
    \node[above] at (-1,0.95) {$a_{-1}$};

    \draw[blue] ([shift=({atan(-1/2)}:0.2)]1,0) arc (atan(-1/2):90:0.2);
    \draw[blue] ([shift=({atan(-2/5)}:0.2)]3,-1) arc (atan(-2/5):90+atan(2):0.2);
    \draw[blue] ([shift=(-90:0.2)]1,1) arc (-90:atan(3/2):0.2);
    \draw[blue] ([shift=({-90-atan(2/3)}:0.2)]3,4) arc (-90-atan(2/3):atan(7/5):0.2);
    \draw[blue] ([shift=({atan(4/3)}:0.2)]2,3) arc (atan(4/3):225:0.2);
    \draw[blue] ([shift=(45:0.2)]0,1) arc (45:180:0.2);
    \draw[blue] ([shift=(0:0.2)]-2,1) arc (0:90+atan(3):0.2);

    \node[right] at (3.15,-0.88) {$a_{-3}$};
    \node[right] at (1.17,0) {$a_{-1}$};
    \node[right] at (1.17,1) {$a_1$};
    \node[right] at (3.08,3.75) {$a_3$};
    \node[left] at (2-0.1,3.23) {$a_2$};
    \node[above] at (-0.05,1.18) {$a_0$};
    \node[above] at (-2.02,1.16) {$a_{-2}$};

    \draw (4.26,1.5) node[right]{$\cK_1$};
    \draw (-0.74,4.5) node[right]{$\cK_2$};
  \end{tikzpicture}
  \caption{Klein polygons and continued fractions} \label{fig:KP_and_CF}
\end{figure}
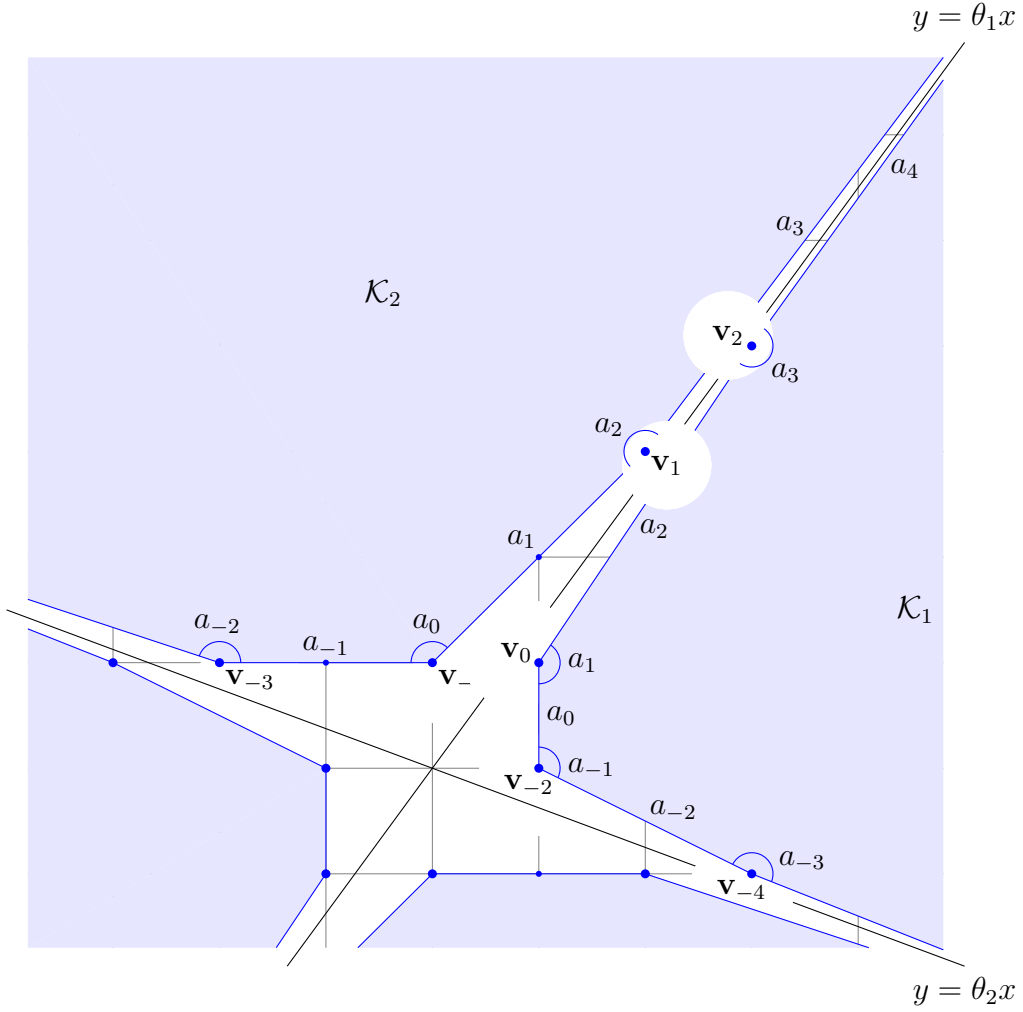

The integer-combinatorial structure of he boundaries $\partial\cK_1$ and $\partial\cK_2$ is closely connected to the continued fractions of $\theta_1$ and $\theta_2$. A detailed exposition of this connection can be found, for instance, in \cite{karpenkov_book} or \cite{german_tlyustangelov}. Here we shall confine ourselves to mentioning that the vertices of $\cK_1$ and $\cK_2$ have coordinates equal to the denominators and numerators of convergents of $\theta_1$ and $\theta_2$, and the integer lengths of their edges equal the corresponding partial quotients of $\theta_1$ and $\theta_2$. We remind that the \emph{integer length} of an integer segment (i.e. a segment with integer endpoints) is the number of empty integer subsegments contained in it. Moreover, in the same way partial quotients are ``attached'' to the edges of $\cK_1$ and $\cK_2$, they can also be ``attached'' to the vertices of $\cK_1$ and $\cK_2$. The reason for this is illustrated by Fig.\ref{fig:edge_vs_sprout}. More precisely, there is a bijection between the vertices of $\cK_1$ and the edges of $\cK_2$ such that a vertex $\vec v$ corresponds to an edge whose integer length is equal to
\[\alpha(\vec v)=|\det(\vec r_1,\vec r_2)|,\]
where $\vec r_1$ and $\vec r_2$ are primitive integer vectors parallel to the edges incident to $\vec v$. At Fig.\ref{fig:edge_vs_sprout} we have $\vec r_1=\vec w-\vec v$, $\vec r_2=\vec u-\vec v$. The quantity $\alpha(\vec v)$ is referred to as \emph{integer angle} at $\vec v$.

Thus, Klein polygons equipped with integer lengths of edges and integer angles at vertices can be viewed as a geometric interpretation of continued fractions.

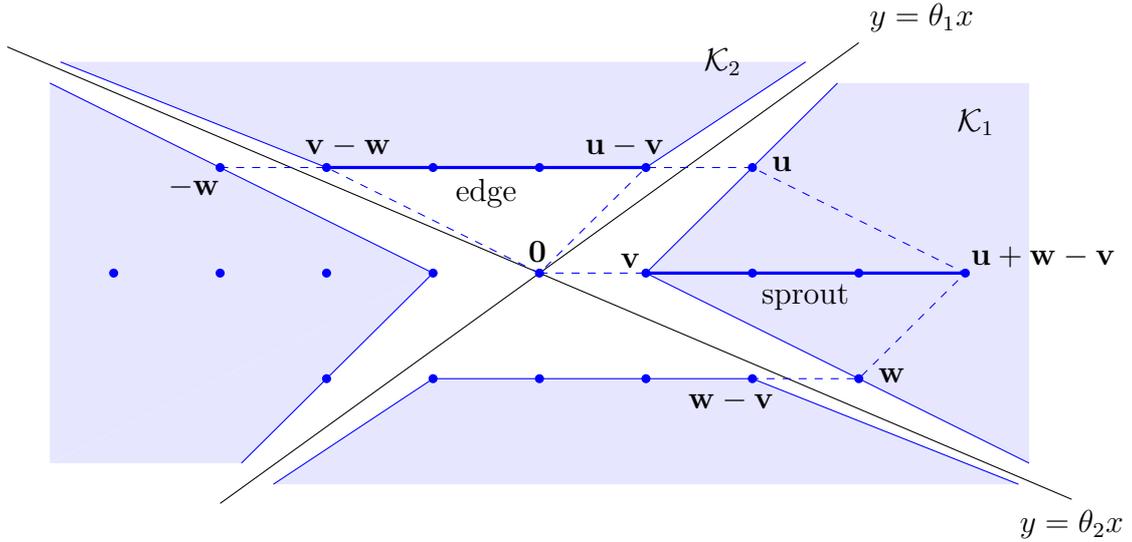
\begin{figure}[h]
  \centering
  \begin{tikzpicture}[scale=1.4]
    \draw[color=black] plot[domain=-3:3] (\x, {8*\x/11}) node[above right]{$y=\theta_1x$};
    \draw[color=black] plot[domain=-5:5] (\x, {-3*\x/7}) node[below]{$y=\theta_2x$};

    \fill[blue!10!,path fading=east]
        (4.6,1.8) -- (1,0) -- (4.6,-1.8) -- cycle;
    \fill[blue!10!,path fading=north]
        (2.8,1.8) -- (1,0) -- (4.6,1.8) -- cycle;
    \fill[blue!10!,path fading=west]
        (-4.6,-1.8) -- (-1,0) -- (-4.6,1.8) -- cycle;
    \fill[blue!10!,path fading=south]
        (-2.8,-1.8) -- (-1,0) -- (-4.6,-1.8) -- cycle;
    \fill[blue!10!,path fading=north]
        (-4.5,2) -- (-2,1) -- (1,1) -- (2.5,2) -- cycle;
    \fill[blue!10!,path fading=south]
        (4.5,-2) -- (2,-1) -- (-1,-1) -- (-2.5,-2) -- cycle;

    \draw[color=blue] (2.8,1.8) -- (1,0) -- (4.6,-1.8);
    \draw[color=blue] (-2.8,-1.8) -- (-1,0) -- (-4.6,1.8);
    \draw[color=blue] (-4.5,2) -- (-2,1) -- (1,1) -- (2.5,2);
    \draw[color=blue] (4.5,-2) -- (2,-1) -- (-1,-1) -- (-2.5,-2);

    \draw[very thick,color=blue] (-2,1) -- (1,1);
    \draw[very thick,color=blue] (1,0) -- (4,0);

    \draw[dashed,color=blue] (-3,1) -- (-2,1) -- (0,0) -- (1,1) -- (2,1) -- (4,0) -- (3,-1) -- (2,-1);
    \draw[dashed,color=blue] (0,0) -- (1,0);

    \node[fill=blue,circle,inner sep=1.2pt] at (-4,0) {};
    \node[fill=blue,circle,inner sep=1.2pt] at (-3,0) {};
    \node[fill=blue,circle,inner sep=1.2pt] at (-2,0) {};
    \node[fill=blue,circle,inner sep=1.2pt] at (-1,0) {};
    \node[fill=blue,circle,inner sep=1.2pt] at (0,0) {};
    \node[fill=blue,circle,inner sep=1.2pt] at (1,0) {};
    \node[fill=blue,circle,inner sep=1.2pt] at (2,0) {};
    \node[fill=blue,circle,inner sep=1.2pt] at (3,0) {};
    \node[fill=blue,circle,inner sep=1.2pt] at (4,0) {};

    \node[fill=blue,circle,inner sep=1.2pt] at (-3,1) {};
    \node[fill=blue,circle,inner sep=1.2pt] at (-2,1) {};
    \node[fill=blue,circle,inner sep=1.2pt] at (-1,1) {};
    \node[fill=blue,circle,inner sep=1.2pt] at (0,1) {};
    \node[fill=blue,circle,inner sep=1.2pt] at (1,1) {};
    \node[fill=blue,circle,inner sep=1.2pt] at (2,1) {};

    \node[fill=blue,circle,inner sep=1.2pt] at (-2,-1) {};
    \node[fill=blue,circle,inner sep=1.2pt] at (-1,-1) {};
    \node[fill=blue,circle,inner sep=1.2pt] at (0,-1) {};
    \node[fill=blue,circle,inner sep=1.2pt] at (1,-1) {};
    \node[fill=blue,circle,inner sep=1.2pt] at (2,-1) {};
    \node[fill=blue,circle,inner sep=1.2pt] at (3,-1) {};

    \draw (-0.02,0) node[above]{$\vec 0$};
    \draw (1.05,-0.05) node[above left]{$\vec v$};
    \draw (2.08,1.03) node[right]{$\vec u$};
    \draw (3.08,-1+0.03) node[right]{$\vec w$};
    \draw (4-0.05,-0.05) node[above right]{$\vec u+\vec w-\vec v$};
    \draw (-3+0.1,1.02) node[below left]{$-\vec w$};
    \draw (-2+0.2,1) node[above]{$\vec v-\vec w$};
    \draw (1-0.2,1) node[above]{$\vec u-\vec v$};
    \draw (2-0.2,-1) node[below]{$\vec w-\vec v$};

    \draw (4.1,1.2) node[above]{$\cK_1$};
    \draw (2,2) node[left]{$\cK_2$};

    \draw (-0.5,1) node[below]{edge};
    \draw (2.5,0) node[below]{sprout};
  \end{tikzpicture}
  \caption{Edge--sprout correspondence} \label{fig:edge_vs_sprout}
\end{figure}

Let us denote by $\cV(\cK_1)$ and $\cV(\cK_2)$ the sets of vertices of $\cK_1$ and $\cK_2$ respectively. Then, due to \eqref{eq:x_z_q_are_same}, \eqref{eq:omega_vs_mu}, and the correspondence described above, the relation \eqref{eq:mu_vs_partial_quotients} can be rewritten as
%
%
\begin{equation} \label{eq:omega_vs_integer_angle}
  \omega(\La_{\theta_1,\theta_2})=
  \tfrac12\hskip-2mm\displaystyle\limsup_{\substack{|\vec v|\to\infty \\ \vec v\in \cV(\cK_1)\cup \cV(\cK_2) }}\hskip-2mm\frac{\log(\alpha(\vec v))}{\log|\vec v|}.
\end{equation}
In other words, Diophantine exponents of lattices are responsible for the growth of integer angles at vertices of Klein polygons.

This point of view proposes also the following reformulation of Proposition \ref{prop:norm_minimum_vs_badly_approximable}.

\begin{proposition} \label{prop:norm_minimum_vs_integer_angles}
  \[N(\La_{\theta_1,\theta_2})>0\iff
    \sup_{\vec v\in \cV(\cK_1)\cup \cV(\cK_2)}\hskip-1mm\alpha(\vec v)<\infty.\]
\end{proposition}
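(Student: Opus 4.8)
The plan is to treat the statement as the ``bounded'' refinement of the asymptotic identity \eqref{eq:omega_vs_integer_angle} and to route the argument through Proposition \ref{prop:norm_minimum_vs_badly_approximable}. That proposition already gives $N(\La_{\theta_1,\theta_2})>0$ if and only if both $\theta_1$ and $\theta_2$ are badly approximable, and it is a classical fact that a real number is badly approximable exactly when its partial quotients are bounded. So the whole content reduces to the dictionary statement
\[\sup_{\vec v\in\cV(\cK_1)\cup\cV(\cK_2)}\alpha(\vec v)<\infty\iff\text{the partial quotients of }\theta_1\text{ and }\theta_2\text{ are all bounded},\]
which is precisely \eqref{eq:omega_vs_integer_angle} with ``$\limsup$ of a ratio'' replaced by ``$\sup$''.

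The key step I would carry out is an \emph{exact} (not merely logarithmic) identification of integer angles with partial quotients. Writing a sail arm through convergent vectors $\vec v_{n-1},\vec v_n,\vec v_{n+1}$ with $\vec v_{n+1}=a_{n+1}\vec v_n+\vec v_{n-1}$ and $|\det(\vec v_{n-1},\vec v_n)|=1$, the primitive directions of the two edges incident to a sail vertex are neighbouring convergent vectors, so
\[\alpha(\vec v_n)=|\det(\vec v_{n-1},\vec v_{n+1})|=a_{n+1}.\]
Thus every integer angle is literally a partial quotient. It then remains to check the converse, that every partial quotient of each $\theta_i$ occurs as some $\alpha(\vec v)$: here I would use that the two sails have four arms in total, corresponding to approaching $y=\theta_1x$ and $y=\theta_2x$ from either side, so that $\{\alpha(\vec v)\}$ enumerates exactly the partial quotients of $\theta_1$ together with those of $\theta_2$. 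Boundedness of the angles is then equivalent to boundedness of all these partial quotients, and the chain of equivalences closes.

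A cleaner but more computational alternative I would keep in reserve is to prove the quantitative estimate $|L_1(\vec v)L_2(\vec v)|\asymp\alpha(\vec v)^{-1}$ uniformly over sail vertices (using that each vertex hugs one line, so that $L_2(\vec v)\asymp|\vec v|$ while $L_1(\vec v)\asymp(|\vec v|\,\alpha(\vec v))^{-1}$ by $q_n^2|\theta_1-p_n/q_n|\asymp a_{n+1}^{-1}$), and then to observe, via the convexity of each region $\{L_1\cdot(-L_2)\geq c\}$ and the concavity of $L_1\cdot(-L_2)$ along each sail edge, that the infimum defining $N$ is governed by the vertices. This yields $N(\La_{\theta_1,\theta_2})\asymp(\sup_{\vec v}\alpha(\vec v))^{-1}$ directly, without invoking Proposition \ref{prop:norm_minimum_vs_badly_approximable}.

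The main obstacle, in either route, is uniformity and completeness of the Klein-polygon dictionary rather than any single inequality. The relation \eqref{eq:omega_vs_integer_angle} only needs logarithmic asymptotics and so tolerates multiplicative slack and finitely many exceptional vertices; here I must ensure that the correspondence vertices $\leftrightarrow$ convergents and angles $\leftrightarrow$ partial quotients is exact for \emph{all} vertices, including the handful near the corner and the turn between the two arms of a sail, with no spurious large angle that fails to come from a genuine partial quotient. Once this bookkeeping is pinned down, the equivalence is immediate.
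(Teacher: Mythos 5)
Your primary route is exactly the paper's: the paper gives no separate proof, presenting this proposition as an immediate reformulation of Proposition \ref{prop:norm_minimum_vs_badly_approximable} via the Klein-polygon dictionary (integer angles at vertices are, up to finitely many exceptional vertices near the corners, precisely the partial quotients of $\theta_1$ and $\theta_2$), combined with the classical fact that badly approximable means bounded partial quotients. Your computation $\alpha(\vec v_n)=|\det(\vec v_{n-1},\vec v_{n+1})|=a_{n+1}$ and the four-arm bookkeeping correctly supply the details the paper delegates to its references, and your reserve argument via $|L_1(\vec v)L_2(\vec v)|\asymp\alpha(\vec v)^{-1}$ is a sound self-contained alternative.
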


\section{Spectra $\pmb{\Omega_d}$ and $\pmb{\tilde\Omega_d}$} \label{sec:spectra}

\subsection{Spectrum of $\pmb{\omega(\La)}$}

We remind that $\cL_d$ denotes the space of full rank lattices in $\R^d$ of covolume $1$. One of the first questions concerning lattice exponents is what values this quantity can attain. It follows from the definition of $\Pi(\vec x)$ that for each positive $t$ we have $\omega(t\La)=\omega(\La)$. Thus, all possible values of $\omega(\La)$ are provided by $\cL_d$, so, we can define the corresponding spectrum as
\[\Omega_d=\Big\{\omega(\Lambda)\,\Big|\,\Lambda\in\cL_d \Big\}.\]
As it was shown in the Introduction, $\Omega_d\subset[0,\infty]$. It is very natural to expect and challenging to prove
that, not only for $d=2$, but in any dimension, every nonnegative value is attainable by lattice exponents. 

\begin{problem} \label{pr:ray}
  Prove that $\Omega_d=[0,\infty]$.
\end{problem}

Theorem \ref{t:schmidt_finitely_many_points} can be applied to prove the existence of lattices with certain positive values of $\omega(\La)$. This way it was proved in \cite{german_2017} that the (finite) set
\begin{equation} \label{eq:spectrum_from_schmidt}
  \Big\{\,
  \frac{\,ab\,}{cd}\
  \Big|
  \begin{array}{l}
    a,b,c\in\N \\
    a+b+c=d
  \end{array}
  \Big\}
\end{equation}
is contained in $\Omega_d$.

Recently, it was shown in \cite{german_2018} that at least starting with some positive boundary every real number is contained in $\Omega_d$.

\begin{theorem}[O.G., 2018] \label{t:rays}
  For each $d\geq3$
  \[\bigg[3-\frac{d}{(d-1)^2}\,,\,+\infty\bigg]\subset\Omega_d\,.\]
\end{theorem}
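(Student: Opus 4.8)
The plan is to realise every prescribed value $\gamma\in\big[\,3-d/(d-1)^2,+\infty\big)$ by an explicit one–parameter family of lattices that couples a \emph{rigid} algebraic skeleton to a single \emph{tunable} real approximation. Concretely, I would fix a totally real field $E$ of degree $d-1$ with embeddings $\sigma_1,\ldots,\sigma_{d-1}$ and ring of integers $\cO_E$, and for a real parameter $\theta$ consider the rank-$d$ lattice spanned by the vectors $\big(\sigma_1(\beta),\ldots,\sigma_{d-1}(\beta),\,\theta\sigma_1(\beta)-p\big)$, where $\beta$ runs through $\cO_E$ and $p$ through $\Z$; this is then rescaled to covolume $1$. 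For such a point one has
\[
  \Pi(\vec x)^d=\big|N_{E/\Q}(\beta)\big|\cdot\big|\theta\sigma_1(\beta)-p\big|,\qquad |\vec x|\asymp\max_i|\sigma_i(\beta)|,
\]
so $\omega(\La)$ is transferred to a mixed question: how small can the product of a nonzero integral norm and a one–dimensional approximation defect be. A point to stress at the outset is that the coupling must be \emph{non-degenerate}: if some coordinate vanished on infinitely many lattice points (as happens for a naive product with a rational linear functional in the last slot, whose kernel meets $\cO_E$ in a rank-$(d-2)$ sublattice) then $\Pi\equiv0$ there and $\omega(\La)=+\infty$ trivially. Using an embedding $\sigma_1$ rather than a $\Q$-functional is exactly what guarantees $\theta\sigma_1(\beta)=p$ forces $\beta=0$, keeping $\omega(\La)$ finite.

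The upper bound $\omega(\La)\leq\gamma$ is the easy direction. Since $\beta\in\cO_E\setminus\{0\}$ forces $|N_{E/\Q}(\beta)|\geq1$, we get $\Pi(\vec x)^d\geq|\theta\sigma_1(\beta)-p|=\|\theta\sigma_1(\beta)\|$, and if $\theta$ is built to have approximation order exactly $\kappa$ relative to the fixed lattice $\sigma_1(\cO_E)$, then $\|\theta\sigma_1(\beta)\|\geq|\vec x|^{-\kappa-\e}$ for all but finitely many points, whence $\omega(\La)\leq\kappa/d$. This step rests only on Minkowski's theorem (for the normalisation) and the integrality of the norm; Theorem \ref{t:schmidt_finitely_many_points} enters only to confirm that the algebraic skeleton alone has $\omega=0$, so that the tunable factor is genuinely in control and no unexpected smallness arises from $\beta$ lying near a proper rational subspace.

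First, though, I would establish the matching lower bound, which is the heart of the matter. One must produce infinitely many $\beta$ of \emph{smallest possible norm} — ideally units — for which $\theta\sigma_1(\beta)$ is extremely close to an integer, realising $\Pi(\vec x)^d\asymp\|\theta\sigma_1(\beta)\|\asymp|\vec x|^{-\kappa}$. The obstruction is that the norm-one elements are, by Dirichlet's unit theorem, a group of rank only $d-2$, so $\sigma_1(\text{units of height}\leq H)$ is a logarithmically sparse set of algebraic reals; one cannot approximate $\theta$ freely, only along this sparse skeleton. I would therefore construct $\theta$ by a nested-interval (continued-fraction-type) procedure slaved to a geometrically growing sequence of units $\e_k$, forcing $\|\theta\sigma_1(\e_k)\|\asymp\max_i|\sigma_i(\e_k)|^{-\kappa}$ while ensuring these remain the \emph{best} approximations (so that the order is exactly $\kappa$ and the upper bound above is not overshot). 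The sharp constant is the output of optimising the three competing quantities — the norm $|N_{E/\Q}(\beta)|\geq1$, the height $\max_i|\sigma_i(\beta)|$, and the attainable defect — against the geometry of the unit lattice in logarithmic coordinates; the extremal trade-off is designed to pin the minimal admissible order, giving $\omega(\La)=\kappa/d$ with lower endpoint exactly $3-d/(d-1)^2$.

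The main obstacle is precisely this lower-bound optimisation: guaranteeing that the designed approximations are carried by elements of controlled norm, that they persist as the best approximations, and — most delicately — that the resulting exponent descends all the way to the \emph{sharp} value $3-d/(d-1)^2$ rather than stopping short or, conversely, dipping below it (a spurious descent to $1/d$ would wrongly settle Problem \ref{pr:ray}, signalling that the sparse-denominator constraint has been mishandled). I expect the decisive input to be a careful count of units of bounded height whose dominant embedding is $\sigma_1$, fed into the consistency inequality $\max_i|\sigma_i(\e_{k+1})|\gtrsim\max_i|\sigma_i(\e_k)|^{\kappa}$ forced by the nested-interval construction. Granting the sharp lower bound, letting the designed order $\kappa$ vary continuously over $\big[\,3d-d^2/(d-1)^2,+\infty\big)$ sweeps $\omega(\La)=\kappa/d$ continuously over $\big[\,3-d/(d-1)^2,+\infty\big)$, and adjoining $+\infty$ — attained by a Liouville-type choice of $\theta$ along the same skeleton — closes the interval and completes the proof.
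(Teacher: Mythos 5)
Your overall architecture --- a rigid algebraic skeleton occupying $d-1$ coordinates plus one tunable linear form in the last slot --- is essentially the decoupling the paper itself describes in Section \ref{sec:linear_forms}: there one takes $L_1,\ldots,L_{d-1},L$, where the auxiliary form $L$ has a prescribed Diophantine type $(\gamma,\delta)$ witnessed by a designated sequence $(\vec z_k)$ together with a lower bound off that sequence, and the remaining forms come from a lattice with $\omega(\La)\leq\delta-1$. Your skeleton (the embeddings of $\cO_E$ for a totally real field of degree $d-1$, so that $\prod_{i<d}|x_i|=|N_{E/\Q}(\beta)|\geq1$) is a legitimate instance of this scheme, and the identity $\Pi(\vec x)^d=|N_{E/\Q}(\beta)|\cdot|\theta\sigma_1(\beta)-p|$ is correct. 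But the theorem's entire content is the specific threshold $3-d/(d-1)^2$, and your argument never produces it: the constant is asserted to fall out of an unspecified ``optimisation'' of a unit count. Nothing in your setup singles it out. Dirichlet's theorem for the form $\theta\sigma_1(\beta)-p$ only obstructs orders below $d-1$ (and even then only if the Dirichlet minimisers had small norm), which for $d\geq3$ is far below your target $\kappa=3d-d^2/(d-1)^2$, while the sparsity of units pushes in the opposite direction; without the explicit trade-off the endpoint could a priori land anywhere in $(1,\infty)$. In the source of Theorem \ref{t:rays} the constant arises from the constraint linking $\gamma$ and $\delta$ in the construction of the auxiliary form combined with $\omega(\La)\leq\delta-1$; your skeleton changes the relevant counting entirely, so even a completed version of your argument would need its own derivation of the endpoint.

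There is also a substantive gap in the upper bound $\omega(\La)\leq\kappa/d$, which you treat as easy. That bound requires $|N_{E/\Q}(\beta)|\cdot\|\theta\sigma_1(\beta)\|\gg H(\beta)^{-\kappa-\e}$ for \emph{all} nonzero $\beta\in\cO_E$, graded by norm, not only for units: since $d-1\geq2$, the set $\sigma_1(\cO_E)$ is dense in $\R$ and contains $\asymp H^{d-1}$ elements of height at most $H$, so this is a weighted inhomogeneous condition over a polynomially dense set of denominators. Your nested-interval construction only discusses steering $\theta$ relative to the logarithmically sparse unit orbit; over the long interval between consecutive designed units $\e_k$ and $\e_{k+1}$ the quantity $\theta\sigma_1(\beta)$ sweeps past many integers for each intermediate $\beta$ of moderate norm, and one must exclude, by a measure or counting argument over all such $\beta$ simultaneously, the sub-intervals where the product drops below the target. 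That exclusion is exactly what determines how small $\kappa$ can be taken, i.e.\ it is the theorem; it cannot be deferred to ``the main obstacle'' and then granted.
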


It is also natural to seek a corresponding analogue of the Jarn\'ik--Besicovitch theorem, i.e. a multidimensional generalization of Theorem \ref{t:jarnik_besicovitch_for_lattices}.

\begin{problem} \label{pr:jarnik_besicovitch}
  Calculate or estimate
  \[\dim_\textup{H}\Big\{\Lambda\in\mathcal L_d\,\Big|\,\omega(\Lambda)\geq\lambda \Big\}.\]
  Prove that, as a function of $\lambda$, it is strictly decreasing for $\lambda\geq0$.
\end{problem}

Obviously, a positive solution to Problem \ref{pr:jarnik_besicovitch} implies the statement of Problem \ref{pr:ray}.

\subsection{Combined spectrum}

For each full rank lattice $\La$ let us denote by $\La^\ast$ the \emph{dual} lattice,
\[ \La^\ast=\Big\{ \vec y\in\R^d \,\Big|\, \langle\vec y,\vec x\rangle\in\Z\text{ for each }\vec x\in\La \Big\}, \]
where $\langle\,\cdot\,,\,\cdot\,\rangle$ is the inner product. It appears that, same as in many other Diophantine approximation settings, the phenomenon of \emph{transference} can be observed. This phenomenon connects dual problems. In the current setting those are the problems concerning $\La$ and $\La^\ast$, in particular $\omega(\La)$ and $\omega(\La^\ast)$.

Of course, if $d=2$ then $\La^\ast$ coincides up to a homothety with $\La$ rotated by $\pi/2$, so, in the two-dimensional case we obviously have $\omega(\La)=\omega(\La^\ast)$. In \cite{german_2017} the following transference theorem was proved.

\begin{theorem}[O.G., 2016] \label{t:lattice_transference}
  For each $\La\in\cL_d$ we have
  \begin{equation} \label{eq:lattice_transference}
    \omega(\La)\geq\frac{\omega(\La^\ast)}{(d-1)^2+d(d-2)\omega(\La^\ast)}\,.
  \end{equation}
  Here we mean that if $\omega(\La^\ast)=\infty$, then $\omega(\La)\geq\dfrac{1}{d(d-2)}$\,.
\end{theorem}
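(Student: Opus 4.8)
The plan is to transfer a strongly approximating vector from $\La^\ast$ to $\La$ by travelling along the diagonal group $\cD_d$ and then invoking the classical duality between successive minima. Throughout I would write $\lambda_1(M)\leq\ldots\leq\lambda_d(M)$ for the successive minima of a lattice $M$ with respect to the sup-norm ball, and use three standing facts: that $\Pi$ is $\cD_d$-invariant and satisfies $\Pi(\vec w)\leq|\vec w|$; Minkowski's second theorem, which for a unimodular $M$ gives $\lambda_1(M)\cdots\lambda_d(M)\asymp_d 1$; and Mahler's transference inequality $\lambda_1(M)\lambda_d(M^\ast)\gg_d 1$. I assume $d\geq3$ (for $d=2$ one has $\omega(\La)=\omega(\La^\ast)$ as noted above), and I discard the degenerate case in which $\La$ meets a coordinate hyperplane in a sublattice of positive rank, since then $\omega(\La)=\infty$ and the claim is vacuous; thus no nonzero point of $\La$ or $\La^\ast$ has a vanishing coordinate.

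Set $\beta=\omega(\La^\ast)$ and fix a small $\e>0$. By definition there are infinitely many $\vec y\in\La^\ast$, with $|\vec y|\to\infty$, satisfying $\Pi(\vec y)\leq|\vec y|^{-\beta+\e}$. For each such $\vec y$ I would take the unique $\vec s\in\R^d$ with $\sum s_i=0$ that \emph{balances} $\vec y$, i.e. $e^{s_i}|y_i|=\Pi(\vec y)$ for every $i$; then every coordinate of $D_{\vec s}\vec y$ has modulus $\Pi(\vec y)$, so $\lambda_1(D_{\vec s}\La^\ast)\leq\Pi(\vec y)$. The one genuinely delicate point is to bound the excursion length $s_{\max}=\max_i s_i$. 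From $\Pi(\vec y)^d=\prod_i|y_i|\leq|y_{\min}|\cdot|\vec y|^{d-1}$ one gets $|y_{\min}|\geq\Pi(\vec y)^d|\vec y|^{-(d-1)}$, hence $e^{s_{\max}}=\Pi(\vec y)/|y_{\min}|\leq\big(|\vec y|/\Pi(\vec y)\big)^{d-1}$. Writing $Y=\log|\vec y|$ and $P=-\log\Pi(\vec y)\geq(\beta-\e)Y$, this reads $s_{\max}\leq(d-1)(Y+P)$.

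Now I would dualise. Since $(D_{\vec s}\La^\ast)^\ast=D_{-\vec s}\La$, Mahler's inequality gives $\lambda_d(D_{-\vec s}\La)\gg_d\Pi(\vec y)^{-1}=e^{P}$. As $D_{-\vec s}\La$ is unimodular, Minkowski's second theorem forces $\prod_{j=1}^{d-1}\lambda_j(D_{-\vec s}\La)\asymp_d\lambda_d(D_{-\vec s}\La)^{-1}\ll_d e^{-P}$, and by monotonicity $\lambda_1(D_{-\vec s}\La)^{d-1}\leq\prod_{j=1}^{d-1}\lambda_j(D_{-\vec s}\La)$, so $\lambda_1(D_{-\vec s}\La)\ll_d e^{-P/(d-1)}$. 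Let $\vec w\in D_{-\vec s}\La$ attain this first minimum and put $\vec x=D_{\vec s}\vec w\in\La\setminus\{\vec 0\}$. Then
\[\Pi(\vec x)=\Pi(\vec w)\leq|\vec w|=\lambda_1(D_{-\vec s}\La)\ll_d e^{-P/(d-1)}.\]

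Finally I estimate $|\vec x|$. From $|\vec x|\leq e^{s_{\max}}|\vec w|$ and the two previous bounds,
\[\log|\vec x|\leq s_{\max}-\tfrac{P}{d-1}+O(1)\leq(d-1)(Y+P)-\tfrac{P}{d-1}+O(1),\]
so, writing $P=pY$ with $p\geq\beta-\e$ and using $(d-1)^2-1=d(d-2)$, the achieved exponent is
\[\frac{-\log\Pi(\vec x)}{\log|\vec x|}\geq\frac{p/(d-1)}{(d-1)(1+p)-p/(d-1)}+o(1)=\frac{p}{(d-1)^2+d(d-2)p}+o(1).\]
The right-hand side is increasing in $p$, so it is minimised at the threshold rate $p=\beta-\e$; letting $\e\to0$ (and noting $|\vec x|\to\infty$, the alternative being the excluded degenerate case) yields infinitely many $\vec x\in\La$ witnessing $\omega(\La)\geq\beta/\big((d-1)^2+d(d-2)\beta\big)$, while $\beta=\infty$ follows by letting $\beta\to\infty$ on the right. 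I expect the bound on $s_{\max}$ — controlling how far the diagonal flow must travel purely from the coordinates of $\vec y$ — to be the crux, since it is precisely the step that injects the factor $(d-1)$ and thereby fixes the shape of the denominator.
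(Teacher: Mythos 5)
This survey does not actually prove Theorem \ref{t:lattice_transference} --- it quotes it from \cite{german_2017} --- so there is nothing to compare line by line; on its own terms your argument (balance $\vec y$ by the diagonal flow, pass to the dual via $\lambda_1(M)\lambda_d(M^\ast)\gg_d1$, compress $\lambda_1$ via Minkowski's second theorem, pay $e^{s_{\max}}$ to flow back) is sound and the arithmetic does land exactly on $\omega(\La^\ast)/\big((d-1)^2+d(d-2)\,\omega(\La^\ast)\big)$; it is essentially the parametric/dynamical reformulation that Section \ref{sec:parametric} of the paper develops (compare Theorem \ref{t:lattice_transference_schmimmerered}), whereas \cite{german_2017} argues directly with parallelepipeds.

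There is, however, one genuine gap: your reduction to the non-degenerate case is asymmetric. If $\La$ has a nonzero point on a coordinate hyperplane then indeed $\omega(\La)=\infty$ and the claim is vacuous, but the clause ``thus no nonzero point of \emph{$\La^\ast$} has a vanishing coordinate'' does not follow from anything. If $\La^\ast$ (but not $\La$) contains a nonzero $\vec y$ with some $y_i=0$, then $\omega(\La^\ast)=\infty$ and the assertion to be proved, $\omega(\La)\geq 1/\big(d(d-2)\big)$, is emphatically not vacuous --- the paper stresses that \emph{all} currently known examples with $\omega(\La^\ast)=+\infty$ are of exactly this degenerate type, and for them $\omega(\La)$ can equal $1/\big(d(d-2)\big)$, so this is the case where the bound is actually attained. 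Your balancing step breaks down there, since no $\vec s$ with $e^{s_i}|y_i|=\Pi(\vec y)$ exists when some $y_i=0$. The fix stays inside your framework: if $\vec y$ has exactly $k<d$ nonzero coordinates, flow with $s_i=-\sigma$ on the support of $\vec y$ and $s_i=k\sigma/(d-k)$ off it, so that $\lambda_1(D_{\vec s}\La^\ast)\leq e^{-\sigma}|\vec y|$ while $s_{\max}=k\sigma/(d-k)\leq(d-1)\sigma$; running your duality and Minkowski steps and letting the free parameter $\sigma\to\infty$ gives points $\vec x\in\La$ with exponent tending to $1/\big((d-1)^2-1\big)=1/\big(d(d-2)\big)$ in the worst case $k=d-1$. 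With that case added, the proof is complete.
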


Thus, the structure of the \emph{combined spectrum}
\[\tilde\Omega_d=\Big\{\big(\omega(\La),\omega(\La^\ast)\big)\,\Big|\,\Lambda\in\cL_d \Big\}\]
is expected to be more complicated than that of $\Omega_d$. At least we definitely have $\tilde\Omega_d\neq[0;+\infty]\times[0;+\infty]$.

Since $(\La^\ast)^\ast=\La$, along with \eqref{eq:lattice_transference} a symmetric inequality holds, the one with $\La$ and $\La^\ast$ interchanged. Therefore,
\begin{equation} \label{eq:transference_for_lattice_exponents}
  \tilde\Omega_d\subset
  \left\{\big(x,y)\in[0,+\infty]^2\,\middle|\,
  \begin{array}{l}
    x\geq\dfrac{y}{(d-1)^2+d(d-2)y} \\
    y\geq\dfrac{x^{\vphantom{|}}}{(d-1)^2+d(d-2)x}
  \end{array} \right\}.
\end{equation}
Particularly, \eqref{eq:transference_for_lattice_exponents} implies that
\begin{equation*} 
  \omega(\La)=0\iff\omega(\La^\ast)=0.
\end{equation*}
Notice that this equivalence is similar to
\begin{equation*} 
  N(\La)>0\iff N(\La^\ast)>0\,
\end{equation*}
(see \cite{skriganov_1998} or \cite{german_2005}).

\begin{problem} \label{pr:lattice_transference}
  Describe $\tilde\Omega_d$ for $d\geq3$. Is it true that
  \[\tilde\Omega_d=
    \left\{\big(x,y)\in[0,+\infty]^2\,\middle|\,
    \begin{array}{l}
      x\geq\dfrac{y}{(d-1)^2+d(d-2)y} \\
      y\geq\dfrac{x^{\vphantom{|}}}{(d-1)^2+d(d-2)x}
    \end{array} \right\}\ ?\]
\end{problem}

In the proof of Theorem \ref{t:rays} (see \cite{german_2018}) the dual lattice is neglected, so, the only nonzero pairs $\big(\omega(\La),\omega(\La^\ast)\big)$ currently known to the author are $(\omega,+\infty)$, where $\omega$ is of the form \eqref{eq:spectrum_from_schmidt}. Moreover, the corresponding examples described in \cite{german_2017} have a certain flaw, as in each of them the dual lattice has some nonzero points in the coordinate planes, so that the condition $\omega(\La^\ast)=+\infty$ is provided by a kind of degeneracy. It would be more interesting to construct lattices that are \emph{totally irrational}, i.e. such that neither the lattice, nor its dual contains nonzero points in the coordinate planes. In this context it is worth mentioning the following result by N.\,Technau and M.\,Widmer \cite{technau_widmer}.

\begin{theorem}[Technau, Widmer, 2016] \label{t:technau_widmer}
  Let ${f}:(0,+\infty)\to(0,1)$ be non-increasing. Then there is a totally irrational lattice $\La\in\cL_d$ such that
  \[
  \begin{aligned}
    & \Pi(\vec x)\gg|\vec x|^{-d}\ \ \text{ for nonzero }\vec x\in\La, \\
    & \Pi(\vec x)\leq{f}(|\vec x|)\,\ \text{ for infinitely many }\vec x\in\La^\ast.
  \end{aligned}
  \]
\end{theorem}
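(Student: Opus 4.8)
The plan is to build $\La$ as a limit of unimodular lattices obtained by perturbing a rigid base lattice at finer and finer scales — a higher-dimensional version of forcing prescribed large partial quotients into a continued fraction (cf. Section \ref{sec:klein_polygons}) while leaving the earlier convergents untouched. No measure- or category-theoretic shortcut is available: by Theorem \ref{t:kleinbock_margulis} almost every $\La\in\cL_d$ satisfies even $\Pi(\vec x)^d\gg_{\La,\e}(\log(1+|\vec x|))^{1-d-\e}\gg|\vec x|^{-d}$, and hence the primal bound; but for rapidly decaying $f$ the same theorem, applied to the dual lattice, shows that \emph{almost no} lattice has infinitely many $\vec x\in\La^\ast$ with $\Pi(\vec x)\leq f(|\vec x|)$. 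Thus both properties cannot be harvested from a generic lattice, and an explicit construction is forced.

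I would start from a lattice $\La_0\in\cL_d$ with positive norm minimum, say the totally real number-field lattice of the Introduction; a Liouville-type estimate yields $\Pi(\vec x)^d\geq\nu_0>0$ for every nonzero $\vec x\in\La_0$, and the same holds for $\La_0^\ast$, which is again a number-field lattice. Both $\La_0$ and $\La_0^\ast$ are then totally irrational with a uniform margin from the coordinate hyperplanes, and this rigidity is the reservoir of room the perturbations will slowly spend. I would define inductively $\La_k=h_k\La_{k-1}$ with $h_k\in\SL_d(\R)$ and $\|h_k-\id\|\leq\e_k$, arranging at the $k$-th step one new vector $\vec y_k\in\La_k^\ast$ with $|\vec y_k|\asymp Y_k$ and $\Pi(\vec y_k)\leq f(Y_k)$. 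The governing principle is scale separation: a perturbation of size $\e_k$ exerts its characteristic effect only on vectors whose norm exceeds a fixed negative power of $\e_k$, so driving $\e_k\to0$ fast forces $Y_k\to\infty$ fast; since we stay free to push $Y_k$ as high as we wish for a given attainable quality, the positive value $\Pi(\vec y_k)$ we realize can always be made to lie below $f(Y_k)$, using nothing about the rate of decay of $f$. Taking $\sum_k\e_k<\infty$ makes $h_k\cdots h_1$ converge, so $\La=\lim_k\La_k\in\cL_d$ is well defined; because every later perturbation displaces $\vec y_k$ by far less than $\Pi(\vec y_k)$, each $\vec y_k$ persists as a vector of $\La^\ast$ still satisfying the second inequality, yielding infinitely many dual solutions. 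Keeping the cumulative displacements summable and bounded away from the hyperplanes (possible since $\Pi(\vec y_k)>0$) simultaneously preserves total irrationality of $\La$ and $\La^\ast$.

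The hard part is the primal bound $\Pi(\vec x)\gg|\vec x|^{-d}$, a single uniform statement over infinitely many vectors that must survive the entire infinite sequence of perturbations. I would control it by the same scale separation made quantitative. For a vector with $\prod_i|x_i|\geq\nu$ one has $|x_i|\geq\nu|\vec x|^{-(d-1)}$, so a perturbation of size $\e_k$ can drive a coordinate of $\vec x$ toward zero, and thereby depress $\Pi(\vec x)$, only once $|\vec x|$ exceeds a threshold comparable to $\e_k^{-1/d}$ — precisely the range in which the target $|\vec x|^{-d}$ is already extremely permissive. Tracking the current coordinate lower bounds along the induction and summing the multiplicative losses $\prod_k\big(1-O(\e_k)\big)$ inflicted on a primal vector of given size, one shows $\Pi(\vec x)\geq c|\vec x|^{-d}$ with a constant $c>0$ uniform in $\vec x$, provided $\e_k$ and $Y_k$ decay and grow fast enough relative to one another. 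Verifying this bookkeeping — ruling out that some cascade of perturbations conspires to manufacture a primal vector of anomalously small $\Pi$ — is the main obstacle. It is exactly here that the freedom to take $Y_k$ arbitrarily large, unconstrained by $f$, is decisive, and the outcome is consistent with the transference inequality \eqref{eq:lattice_transference}, which permits $\omega(\La)$ as small as $\tfrac1{d(d-2)}$ even when $\omega(\La^\ast)=\infty$.
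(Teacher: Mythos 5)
First, a point of reference: the survey does not prove this statement at all --- it is quoted from Technau and Widmer \cite{technau_widmer} --- so there is no in-paper proof to compare against, and your attempt has to stand on its own. Its general shape is reasonable (start from an admissible number-field lattice, apply a rapidly converging sequence of small perturbations, each planting one very good dual approximation at a scale far beyond the reach of all previous steps), and your opening observation that the metric theorems rule out a generic lattice is correct: for rapidly decaying $f$ the convergence half of Theorem \ref{t:kleinbock_margulis}, applied to the dual, shows the second property holds for almost no lattice, so an explicit construction is indeed forced.

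The genuine gap sits exactly where you place ``the main obstacle'', and the bookkeeping you propose does not close it. A perturbation $h_k=\id+E_k$ with $\|E_k\|\leq\e_k$ moves the $i$-th coordinate of a primal vector $\vec x$ \emph{additively} by up to about $\e_k|\vec x|$, not multiplicatively by a factor $1-O(\e_k)$; since beforehand one only knows $|x_i|\geq\nu\,|\vec x|^{-(d-1)}$, the actual loss factor is $1-\e_k|\vec x|^{d}/\nu$, which is vacuous precisely in the critical range $|\vec x|\gtrsim\e_k^{-1/d}$ that you yourself single out. In that range the perturbation can in principle annihilate a coordinate outright, and then no target, however ``permissive'', is met; so the product $\prod_k\big(1-O(\e_k)\big)$ controls nothing for large vectors. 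What is missing is an argument exploiting the specific structure of the chosen $h_k$ (in Technau--Widmer the twist is essentially a single shear whose parameter is a Liouville-type series) showing that every primal product $\prod_i|x_i|$ degrades by no more than the amount permitted by $|\vec x|^{-d^2}$; in particular your sketch never derives the exponent $d$, which is the entire quantitative content of the first inequality. Two secondary slips: the claim that the construction uses ``nothing about the rate of decay of $f$'' is not accurate, since keeping $\Pi(\vec y_k)\leq f(Y_k)$ stable under all later steps forces $\sum_{j>k}\e_j$ to be small compared with $f(Y_k)^d Y_k^{-d}$, so the $\e_j$ do depend on $f$; and total irrationality must be secured for the \emph{limit} lattice and its dual, not merely for each $\La_k$, which again needs a quantitative margin you assert but do not arrange.
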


Clearly, Theorem \ref{t:technau_widmer} implies the existence of a totally irrational $\La$ such that
\[0\leq\omega(\La)\leq d,\qquad\omega(\La^\ast)=+\infty.\]
Notice that in view of \eqref{eq:transference_for_lattice_exponents} the inequality $0\leq\omega(\La)\leq d$ for such $\La$ can be substituted by
\[\frac1{d(d-2)}\leq\omega(\La)\leq d.\]

\subsection{Linear forms of a given Diophantine type} \label{sec:linear_forms}

The proof of Theorem \ref{t:rays} is based on an existence theorem concerning linear forms of a given Diophantine type. It is rather difficult to control the values of all the $d$ forms at once. Controlling the values of each one of them separately is much simpler.

Given $\gamma,\delta\in\R$,
\begin{equation} \label{eq:gamma_delta_geq_1}
  \gamma\geq\delta\geq 1,
\end{equation}
suppose we can construct a linear form $L$ and a sequence $(\vec z_k)\subset\Z^d\backslash\{\vec 0\}$, $|\vec z_k|\to\infty$, such that

  \textup{(i)} $|L(\vec z)|\cdot|\vec z|^{d-1}\asymp|\vec z|^{-d\gamma}$ for $\vec z\in(\vec z_k)$;

  \textup{(ii)} $|L(\vec z)|\cdot|\vec z|^{d-1}\gg|\vec z|^{-d(\gamma-\delta)}$ for $\vec z\in\Z^d\backslash\bigcup_k\Z\vec z_k$;

  \textup{(iii)} the set of accumulation points of the sequence $(\vec z_k/|\vec z_k|)$ is not too large, for instance, consists of finitely many points.

Take arbitrary linear forms $L_1,\ldots,L_d$ such that none of them is zero at the accumulation points of $(\vec z_k/|\vec z_k|)$ and
\begin{equation} \label{eq:omega_leq_delta-1}
  \omega(\La)\leq\delta-1
\end{equation}
for $\La$ defined by \eqref{eq:lattice} (for instance, $\omega(\La)=0$). Then
\[
\begin{aligned}
  & |L_i(\vec z)|\asymp|\vec z|,\quad\,  i=1,\ldots,d,\quad\text{ for }\vec z\in(\vec z_k), \\
  & |L_i(\vec z)|\ll|\vec z|,\quad  i=1,\ldots,d,\quad\text{ for }\vec z\in\Z^d,
\end{aligned}
\]
and
\[
\begin{aligned}
  & |L_1(\vec z)\ldots L_{d-1}(\vec z)L(\vec z)|\asymp|\vec z|^{-d\gamma}\ \text{ for }\vec z\in(\vec z_k), \\
  & |L_1(\vec z)\ldots L_{d-1}(\vec z)L(\vec z)|=
    |L_1(\vec z)\ldots L_d(\vec z)|\frac{|L(\vec z)|\cdot|\vec z|^{d-1}}{|L_d(\vec z)|\cdot|\vec z|^{d-1}}\gg \\
  & \phantom{|L_1(\vec z)\ldots L_{d-1}(\vec z)L(\vec z)|}\gg
    |\vec z|^{-d(\delta-1)-d(\gamma-\delta)-d}=
    |\vec z|^{-d\gamma}
    \ \text{ for }
    \vec z\in\Z^d\backslash\textstyle\bigcup_k\Z\vec z_k.
\end{aligned}
\]
Hence
\[\omega(\La')=\gamma,\]
where
\[\La'=\Big\{\big(L_1(\vec z),\ldots,L_{d-1}(\vec z),L(\vec z)\big)\,\Big|\,\vec z\in\Z^d \Big\}.\]

Notice that the construction described cannot provide values of the exponent smaller than $1$. Nevertheless, Theorem \ref{t:rays} does not reach even this bound.

\begin{problem} \label{pr:linear_form}
  Given $\gamma\geq 0$, prove that there is a linear form $L$ and a sequence $(\vec z_k)\subset\Z^d\backslash\{\vec 0\}$, $|\vec z_k|\to\infty$, such that

  \textup{(i)} $|L(\vec z)|\cdot|\vec z|^{d-1}\asymp|\vec z|^{-d\gamma}$ for $\vec z\in(\vec z_k)$;

  \textup{(ii)} $|L(\vec z)|\cdot|\vec z|^{d-1}\gg1$ for $\vec z\in\Z^d\backslash\bigcup_k\Z\vec z_k$;

  \textup{(iii)} the set of accumulation points of the sequence $(\vec z_k/|\vec z_k|)$ is separated from some $(d-1)$-dimensional subspace of $\R^d$.
\end{problem}

In order to apply the statement of Problem \ref{pr:linear_form} as described above, we need $\gamma\geq1$ so that \eqref{eq:gamma_delta_geq_1} and \eqref{eq:omega_leq_delta-1} are consistent. Then it would give $[1,+\infty]\subset\Omega_d$, which is weaker than the statement of Problem \ref{pr:ray}, but Problem \ref{pr:linear_form} seems to be of independent interest.

As Nikolay Moshchevitin noticed, the existence of a linear form and a sequence of integer points satisfying statements \textup{(i)}, \textup{(ii)} of Problem \ref{pr:linear_form} should follow from Schmidt--Summerer's parametric geometry of numbers (see \cite{schmidt_summerer_2009}, \cite{schmidt_summerer_2013}, \cite{roy_annals_2015}) due to Roy's theorem (see \cite{roy_annals_2015}). The reason for this lies in the difference between best approximation vectors and all the other points. If $\vec z\in\Z^d$ is not an integer multiple of a best approximation vector for $L$, there is a point $\vec z'\in\Z^d$ linearly independent with $\vec z$ such that
\[\begin{cases}
  |L(\vec z')|\leq|L(\vec z)| \\
  |\underline{\vec z'}|\leq|\underline{\vec z}|
\end{cases},\]
where the underscore means the orthogonal projection to the hyperplane of the first $d-1$ coordinates.
Then
\[\lambda_2\bigg(\cC\Big(|\underline{\vec z}|\big/|L(\vec z)|\Big)\bigg)\leq|\underline{\vec z}|,\]
where
\[\cC(Q)=\Big\{\vec x\in\R^d\,\Big|\,|\underline{\vec x}|\leq1,\ |L(\vec x)|\leq Q^{-1} \Big\}\]
and $\lambda_j\big(\cC(Q)\big)$ is the $j$-th successive minimum of $\cC(Q)$ w.r.t. $\Z^d$. Thus, any lower bound for $\lambda_2\big(\cC(Q)\big)$ gives a lower bound in the spirit of statement \textup{(ii)} of Problem \ref{pr:linear_form}. However, Roy's theorem does not seem to immediately give any information concerning accumulation points of the set
\[\bigg\{\frac{\vec z}{|\vec z|}\,\bigg|\,\vec z\text{ is a best approximation vector for }L \bigg\},\]
which we rely upon when obtaining a bound for the lattice exponent.

\section{Multidimensional continued fractions} \label{sec:multicontractions}

\subsection{Facets and edge stars of Klein polyhedra} \label{sec:facets_and_edge_stars}

As it was mentioned in Section \ref{sec:klein_polygons}, in the two-dimensional case Diophantine exponents of lattices are responsible for the growth of integer angles at vertices of Klein polygons. It is natural to ask whether this connection can be generalized to arbitrary dimension.

Let $\La\in\cL_d$. Suppose $\La$ has no nonzero points in the coordinate planes. Let $\cO$ be one of the $2^d$ orthants, and let us consider the convex hull
\[\cK=\conv(\cO\cap\La\backslash\{\vec 0\}).\]
By analogy with the two-dimensional case, this convex hull is named \emph{Klein polyhedron}. As we assume $\La$ not to have any nonzero points in the coordinate planes, $\cK$ is a generalized polyhedron, i.e. its intersection with any bounded polyhedron is itself a polyhedron (see \cite{mussafir_2003}). Hence its boundary $\partial\cK$ has nice polyhedral structure, each vertex of $\cK$ is a lattice point incident to finitely many edges of $\cK$.

In the two-dimensional case the role of partial quotients is played by edges and pairs of adjacent edges of a Klein polygon. It is natural to expect that in the multidimensional case the same role is played by some local objects such as faces of different dimensions or by their unions. Besides that, we can consider some quantitative characteristics of those objects similar to integer lengths and integer angles.

Following this idea, in \cite{german_2005}, \cite{german_2007}, \cite{german_lakshtanov_2008} facets (faces of dimension $d-1$) and edge stars (unions of the edges incident to a vertex) are considered. 

Given a facet $F$ of $\cK$ with vertices $\vec v_1,\ldots,\vec v_k$, its \emph{determinant} is defined as
\[\det F=\sum_{1\leq i_1<\ldots<i_d\leq k}|\det(\vec v_{i_1},\ldots,\vec v_{i_d})|.\]

Given a vertex $\vec v$ of $\cK$, let $\vec r_1,\ldots,\vec r_k$ be the primitive lattice vectors parallel to the edges incident to $\vec v$. Denote by $\starv$ the edge star of $\vec v$. Then its \emph{determinant} is defined as
\[\det\starv=\sum_{1\leq i_1<\ldots<i_d\leq k}|\det(\vec r_{i_1},\ldots,\vec r_{i_d})|.\]

These quantities also equal the volumes of the Minkowski sums of $\vec v_1,\ldots,\vec v_k$ and of $\vec r_1,\ldots,\vec r_k$ respectively. They allow formulating a multidimensional generalization of Propositions \ref{prop:norm_minimum_vs_badly_approximable} and \ref{prop:norm_minimum_vs_integer_angles}. The following statement was proved in \cite{german_2005}, \cite{german_2007}.

\begin{theorem}[O.G., 2007] \label{t:german_norm_minima}
  Let $\cK_1,\ldots,\cK_{2^d}$ be the Klein polyhedra corresponding to $\La$ and all the $2^d$ orthants. Let $\cV(\cK_i)$ and $\cF(\cK_i)$ denote respectively the set of vertices and the set of facets of $\cK_i$. Then
  \begin{multline*}
    \hskip14mm
    N(\La)>0\iff
    \sup_{F\in\bigcup_i\cF(\cK_i)}\hskip-1mm\det F<\infty
    \hskip1.5mm
    \iff \\ \iff
    \sup_{\vec v\in\bigcup_i\cV(\cK_i)}\hskip-1mm\det\starv<\infty\iff
    \begin{cases}
      \displaystyle
      \sup_{F\in\cF(\cK_1)}\hskip-1mm\det F<\infty \\
      \displaystyle
      \sup_{\vec v\in\cV(\cK_1)}\hskip-1mm\det\starv<\infty
    \end{cases}.
    \hskip10mm
  \end{multline*}
\end{theorem}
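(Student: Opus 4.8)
The plan is to deduce everything from Mahler's compactness criterion, recalled in the introduction: $N(\La)>0$ exactly when $\La$ lies in the compact set $K_c=\{M\in\cL_d:N(M)\geq c\}$ for $c=N(\La)>0$. The engine of the proof is that both $\det F$ and $\det\starv$ are invariant under the diagonal group $\cD_d$. Indeed, every $g\in\cD_d$ fixes each orthant and sends $\La$ to $g\La$, hence carries the Klein polyhedra of $\La$ to those of $g\La$; since $\det g=1$, each $d\times d$ determinant $\det(\vec v_{i_1},\ldots,\vec v_{i_d})$ and $\det(\vec r_{i_1},\ldots,\vec r_{i_d})$ is left unchanged. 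Thus any single face can be studied after moving it, by a suitable diagonal matrix, into a normalised position while keeping the lattice inside $K_c$. I will prove the four conditions equivalent by establishing that $N(\La)>0$ is equivalent to boundedness of $\det F$ over all facets of all $2^d$ polyhedra, then the analogous statement for edge stars, and finally the sharper single-polyhedron criterion.

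For the implication $N(\La)>0\Rightarrow$ bounded determinants I argue by compactness. First, every vertex $\vec v$ of a Klein polyhedron is a minimal point of its orthant: if the open box $\prod_i(0,v_i)$ contained a lattice point $\vec w$, then $\vec v=\tfrac12\big(\vec w+(2\vec v-\vec w)\big)$ would exhibit $\vec v$ as the midpoint of two lattice points of the same orthant, contradicting extremality. Normalising by $g_{\vec v}=\mathrm{diag}\big(\Pi(\vec v)/v_1,\ldots,\Pi(\vec v)/v_d\big)\in\cD_d$ turns this empty box into the empty open cube $\big(0,\Pi(\vec v)\big)^d$ of the lattice $g_{\vec v}\La\in K_c$; since the covering radius is bounded on the compact set $K_c$, an empty cube forces $\Pi(\vec v)$ to be bounded, so the normalised vertex $\Pi(\vec v)(1,\ldots,1)$ lies in a fixed ball. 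Were $\det\starv$ unbounded along some sequence of vertices, the normalised lattices would converge, along a subsequence, to a limit $\La_\infty\in K_c$ with $N(\La_\infty)\geq c>0$; the sail of such a non-degenerate lattice is locally finite near the bounded limit vertex, so the edge stars of the approximating lattices stabilise and their determinants converge to a finite value, a contradiction. The same compactness scheme, normalising instead the balanced supporting form of a facet, bounds $\det F$.

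For the converse I prove the contrapositive. If $N(\La)=0$ then $\cD_d\La$ escapes every compact set, so there are $g_k\in\cD_d$ for which $g_k\La$ acquires arbitrarily short vectors, i.e. becomes thin along some coordinate direction. I would show that such thinness forces the sail of $g_k\La$, in the orthant adjacent to the short vector, to develop either an arbitrarily flat extended facet or an arbitrarily wide edge star; pulling back by $g_k^{-1}$ and using invariance yields faces of $\La$ with $\det F\to\infty$ (respectively $\det\starv\to\infty$). Because every escape direction of $\cD_d\La$ is detected by some orthant, ranging over all $2^d$ Klein polyhedra captures all of them, which gives both $\sup\det F=\infty$ and $\sup\det\starv=\infty$ and, in particular, the equivalence of the second and third conditions. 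In fact these two suprema are linked through a duality between facets and edge stars across orthants, generalising the edge--sprout correspondence of Figure~\ref{fig:edge_vs_sprout}: a facet of one polyhedron is matched, via the primitive normal and the orthant's sign pattern, with an edge star of a neighbouring polyhedron of equal determinant.

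The sharpest statement, that boundedness of $\det F$ and of $\det\starv$ over the single polyhedron $\cK_1$ already yields $N(\La)>0$, is where I expect the real difficulty. A single orthant cannot see every escape direction, and the remedy is precisely the facet--edge-star duality above: control of the facets of $\cK_1$ transfers to edge stars in the neighbouring orthants, while control of the edge stars of $\cK_1$ transfers to their facets, and together these two families propagate across all $2^d$ orthants and reduce to the converse already proved. Making this duality explicit in arbitrary dimension --- deciding which orthant's edge star corresponds to a given facet of $\cK_1$ and verifying the equality of determinants --- together with the stabilisation lemma for sails of non-degenerate lattices underlying the compactness argument, is the main obstacle; once these are in place the remaining estimates are routine applications of Minkowski's theorem and of the zonotope volume formula $\vol\big(\sum_i[0,\vec r_i]\big)=\sum|\det(\vec r_{i_1},\ldots,\vec r_{i_d})|$.
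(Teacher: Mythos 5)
First, a point of reference: the survey does not prove Theorem \ref{t:german_norm_minima} at all --- it is quoted from \cite{german_2005} and \cite{german_2007} --- so there is no in-paper argument to compare against; your proposal has to stand on its own. It does not, because the two ingredients you yourself identify as ``the main obstacle'' are exactly where the theorem lives, and one of them is asserted on a false premise. The load-bearing claim is your facet--edge-star duality: that a facet of the Klein polyhedron in one orthant is matched with an edge star in a \emph{neighbouring orthant of the same lattice}, with equal determinant, generalising the edge--sprout picture of Fig.~\ref{fig:edge_vs_sprout}. That correspondence is a genuinely two-dimensional phenomenon; for $d\geq3$ the sails of $\La$ in adjacent orthants have essentially unrelated combinatorics, and no determinant-preserving bijection between facets of one and edge stars of another is available. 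The transference that actually powers the proofs in \cite{german_2005}, \cite{german_2007} runs between $\La$ and the \emph{dual} lattice $\La^\ast$ (cf.\ the equivalence $N(\La)>0\iff N(\La^\ast)>0$ recalled in Section 3.2), not between orthants of $\La$, and even there it is an inequality-level statement, not an equality of determinants. Since you use the duality (a) to upgrade ``$N(\La)=0$ forces a flat facet \emph{or} a wide edge star'' to the conjunction needed for the chain of equivalences, and (b) to reduce the single-polyhedron criterion to the all-orthants one, both of those steps are unsupported as written.

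The forward implication also has a real gap hiding in the word ``stabilise.'' After normalising by $g_{\vec v_k}$ you have lattices $\La_k\in K_c$ converging to $\La_\infty$ and vertices $\vec v'_k$ in a fixed annulus, and you want $\det\textup{St}_{\vec v'_k}\to\det\textup{St}_{\vec v'_\infty}<\infty$. But an edge star is a discrete object: along the sequence the number of edges at $\vec v'_k$ could grow without bound, or the far endpoints of those edges could escape to infinity, and precisely this is what unbounded $\det\starv$ looks like. Ruling it out for lattices in $K_c$ requires a \emph{uniform} local finiteness statement for sails over the compact set $K_c$, which is essentially the implication you are trying to prove; local finiteness of the single limit sail $\partial\cK(\La_\infty)$ (Mussafir) does not transfer to the approximating sails without additional work. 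So the compactness scheme is not wrong in spirit --- the preliminary steps (vertices are minimal points, the empty box $\prod_i(0,v_i)$, boundedness of $\Pi(\vec v)$ via the covering radius on $K_c$, the zonotope volume identity) are all fine --- but as presented it is circular at its key step, and the single-polyhedron refinement rests on a duality that does not exist in the claimed form.
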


It is reasonable to expect that this approach can be fruitful for generalizing not only Propositions \ref{prop:norm_minimum_vs_badly_approximable} and \ref{prop:norm_minimum_vs_integer_angles}, but also relation \eqref{eq:omega_vs_integer_angle}.

\begin{problem}
  Is it true that
  \begin{equation} \label{eq:omega_vs_det_starv}
    \omega(\La)\asymp
    \hskip-1mm\limsup_{\substack{|\vec v|\to\infty \\ \vec v\in\bigcup_i^{\vphantom|}\cV(\cK_i) }}\hskip-2mm\frac{\log(\det\starv)}{\log|\vec v|}\ ?
  \end{equation}
\end{problem}

Of course, instead of $\det\starv$ one can consider any other local integer linear or affine invariant, should it seem more appropriate.

\subsection{An argument in favour of \eqref{eq:omega_vs_det_starv}}

Let $\La$ and $\cK$ be as in \ref{sec:facets_and_edge_stars}. Let $|\cdot|$, as before, denote the sup-norm. Let $\vec v$ be a vertex of $\cK$, $\vec v=(v_1,\ldots,v_d)$. Set
\[D=\textup{diag}\Big(\Pi(\vec v)\big/|v_1|,\ldots,\Pi(\vec v)\big/|v_d|\Big).\]
Then $\cK'=D\cK$ is one of the $2^d$ Klein polyhedra corresponding to $\La'=D\La$, $\vec v'=D\vec v$ is its vertex, and
\[\det\textup{St}_{\vec v'}=\det\starv.\]
Moreover, $\vec v'$ is the shortest nonzero vector of $\La'$ and
\[|\vec v'|=\Pi(\vec v')=\Pi(\vec v).\]

Suppose we can choose vectors $\vec r_1,\ldots,\vec r_d$ among the primitive vectors of $\La'$ that are parallel to the edges of $\cK'$ incident to $\vec v'$ so that they satisfy

\vskip3mm
\textup{(i)} $|\vec r_1\wedge\ldots\wedge\vec r_i|\gg|\vec r_1\wedge\ldots\wedge\vec r_{i-1}|\cdot|\vec r_i|$ for each $i=2,\ldots,d$;

\textup{(ii)} any $d$ vectors among $\vec v'$, $\vec r_1,\ldots,\vec r_d$ are linearly independent.

\vskip3mm
\noindent
Then, on the one hand,
\begin{equation} \label{eq:prod_is_leq}
  \prod_{1\leq i\leq d}|\vec r_i|\asymp
  |\det(\vec r_1,\ldots,\vec r_d)|\leq
  \det\textup{St}_{\vec v'}=\det\starv.
\end{equation}
On the other,
\[\prod_{1\leq i\leq d}
  \bigg(|\vec v'|
        \prod_{\substack{0\leq j\leq d \\ j\neq i}}
        |\vec r_j|\bigg)\gg
  (\det\La')^d=1,\]
whence
\begin{equation} \label{eq:prod_is_geq}
  \prod_{1\leq i\leq d}|\vec r_i|\gg
  |\vec v'|^{-\frac d{d-1}}=
  \Pi(\vec v)^{-\frac d{d-1}}.
\end{equation}
Combining \eqref{eq:prod_is_leq} and \eqref{eq:prod_is_geq} we get
\begin{equation} \label{eq:Pi_leq_starv}
  \Pi(\vec v)^{-\frac d{d-1}}\ll\det\starv.
\end{equation}

Suppose, as before, $\cK_1,\ldots,\cK_{2^d}$ are all the $2^d$ Klein polyhedra of $\La$, and $\cV(\cK_i)$ is the set of vertices of $\cK_i$. Since the vertices of a Klein polyhedron give local minima of $\Pi(\vec x)$, \eqref{eq:omega_as_limsup} can be rewritten as
\begin{equation*} 
  \omega(\La)=
  \hskip-1mm\limsup_{\substack{|\vec v|\to\infty \\ \vec v\in\bigcup_i^{\vphantom|}\cV(\cK_i) }}\hskip-2mm\frac{\log\big(\Pi(\vec v)^{-1}\big)}{\log|\vec v|}\,.
\end{equation*}
Thus, \eqref{eq:Pi_leq_starv} implies
\[\omega(\La)\leq
  \frac{d-1}{d}
  \hskip-1mm\limsup_{\substack{|\vec v|\to\infty \\ \vec v\in\bigcup_i^{\vphantom|}\cV(\cK_i) }}\hskip-2mm\frac{\log(\det\starv)}{\log|\vec v|}\,,\]
which is a ``half'' of \eqref{eq:omega_vs_det_starv}.

Of course, statements \textup{(i)}, \textup{(ii)} are still to be properly proved, but they seem to be quite natural.

\section{Multiplicative parametric geometry of numbers} \label{sec:parametric}

\subsection{Successive minima} 

In the spirit of fundamental works \cite{schmidt_summerer_2009}, \cite{schmidt_summerer_2013}, \cite{roy_annals_2015} it is natural to propose the following approach. Most of the argument proposed is a translation to the current context of the argument of Schmidt and Summerer \cite{schmidt_summerer_2009}, and also of the paper \cite{german_AA_2012}. We remind that $|\cdot|$ denotes the sup-norm.

Let $\La\in\cL_d$. Set
\[\cB=\Big\{ \vec x\in\R^d\, \Big|\,|\vec x|\leq1 \Big\}.\]
For each $\pmb\tau=(\tau_1,\ldots,\tau_d)\in\R^d$, $\tau_1+\ldots+\tau_d=0$, set
\[D_{\pmb\tau}=\textup{diag}(e^{\tau_1},\ldots,e^{\tau_d})\]
and
\[\cB_{\pmb\tau}=D_{\pmb\tau}\cB.\]
Set also
\[|\pmb\tau|_+=\max\Big\{\tau_i\,\Big|\,\tau_i\geq0\Big\},
  \qquad
  |\pmb\tau|_-=|-\pmb\tau|_+=\max\Big\{|\tau_i|\,\Big|\,\tau_i\leq0\Big\}.\]
Clearly,
\[|\pmb\tau|=\max(|\pmb\tau|_-,|\pmb\tau|_+),\]
\[|\pmb\tau|_+/(d-1)\leq|\pmb\tau|_-\leq(d-1)|\pmb\tau|_+\ .\]

Let $\lambda_i(\cB_{\pmb\tau})$, $i=1,\ldots,d$, denote the $i$-th successive minimum, i.e.
the infimum of positive $\lambda$ such that $\lambda\cB_{\pmb\tau}$ contains at least $i$ linearly independent vectors of $\La$.
Finally, for each $i=1,\ldots,d$, let us set
\[L_i(\pmb\tau)=\log\big(\lambda_i(\cB_{\pmb\tau})\big).\]

\begin{proposition} \label{prop:properties_of_L_i}
  The functions $L_i(\pmb\tau)$ enjoy the following properties:

  \textup{(i)} $-|\pmb\tau|_++O(1)\leq L_1(\pmb\tau)\leq\ldots\leq L_d(\pmb\tau)\leq|\pmb\tau|_-+O(1)$; \vphantom{\bigg|}
  
  \textup{(ii)} $-\log d!\leq\sum_{1\leq i\leq d}L_i(\pmb\tau)\leq0$;

  \textup{(iii)} $L_i(\pmb\tau)$ is continuous and piecewise linear. \vphantom{\bigg|}
\end{proposition}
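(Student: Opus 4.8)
The plan is to make everything explicit through the box description of $\cB_{\pmb\tau}$. Since $\cB$ is the sup-norm unit cube, $\cB_{\pmb\tau}=D_{\pmb\tau}\cB=\{\vec x\in\R^d : |x_i|\leq e^{\tau_i}\text{ for all }i\}$, so for a nonzero $\vec z\in\La$ the least $\lambda$ with $\vec z\in\lambda\cB_{\pmb\tau}$ equals $\max_i|z_i|e^{-\tau_i}$. Setting $f_{\vec z}(\pmb\tau)=\max_{i:\,z_i\neq0}\big(\log|z_i|-\tau_i\big)$ for its logarithm, and using that each successive minimum is attained by linearly independent lattice vectors, I would record the exact formula
\[
  L_i(\pmb\tau)=\min_{\substack{\vec z_1,\ldots,\vec z_i\in\La\\ \text{linearly independent}}}\ \max_{1\leq j\leq i}f_{\vec z_j}(\pmb\tau).
\]
Each $f_{\vec z}$ is a maximum of finitely many functions that are affine in $\pmb\tau$; this single observation drives all three parts.

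For (ii) I would invoke Minkowski's second theorem for $\cB_{\pmb\tau}$ and $\La$. Because $\det D_{\pmb\tau}=e^{\tau_1+\ldots+\tau_d}=1$, we have $\vol(\cB_{\pmb\tau})=\vol(\cB)=2^d$, while $\covol\La=1$; hence $\tfrac1{d!}\leq\lambda_1(\cB_{\pmb\tau})\cdots\lambda_d(\cB_{\pmb\tau})\leq1$, and taking logarithms gives exactly $-\log d!\leq\sum_iL_i(\pmb\tau)\leq0$. The ordering in (i) is immediate from $\lambda_1\leq\ldots\leq\lambda_d$. For the two end bounds I would squeeze $\cB_{\pmb\tau}$ between dilates of the cube: from $\tau_i\geq-|\pmb\tau|_-$ and $\tau_i\leq|\pmb\tau|_+$ one gets $e^{-|\pmb\tau|_-}\cB\subseteq\cB_{\pmb\tau}\subseteq e^{|\pmb\tau|_+}\cB$. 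Monotonicity and homogeneity of successive minima then yield $\lambda_d(\cB_{\pmb\tau})\leq e^{|\pmb\tau|_-}\lambda_d(\cB)$ and $\lambda_1(\cB_{\pmb\tau})\geq e^{-|\pmb\tau|_+}\lambda_1(\cB)$, i.e. $L_d(\pmb\tau)\leq|\pmb\tau|_-+O(1)$ and $L_1(\pmb\tau)\geq-|\pmb\tau|_++O(1)$, the constants depending only on $\La$.

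For (iii), continuity comes from a uniform Lipschitz bound: since $\big|\max_i(a_i-\tau_i)-\max_i(a_i-\tau_i')\big|\leq\max_i|\tau_i-\tau_i'|$, every $f_{\vec z}$ is $1$-Lipschitz in the sup-norm, uniformly in $\vec z$, so the min--max defining $L_i$ is $1$-Lipschitz as well. For piecewise linearity I would argue by local finiteness. On a fixed compact $K$, part (i) bounds $L_d$ by some $C_K$; any lattice vector appearing in an optimal tuple for some $\pmb\tau\in K$ then satisfies $f_{\vec z}(\pmb\tau)\leq C_K$, forcing $|z_i|\leq e^{\tau_i+C_K}$ and hence confining $\vec z$ to a fixed bounded box. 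Thus only finitely many vectors $\vec z$ are relevant over $K$, and on $K$ the formula above reduces to a minimum over finitely many maxima of affine functions. Passing to the common refinement of the finitely many polyhedral subdivisions makes $L_i$ affine on each cell, so $L_i$ is piecewise linear on $K$; as $K$ is arbitrary, $L_i$ is piecewise linear throughout the hyperplane $\{\tau_1+\ldots+\tau_d=0\}$.

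The main obstacle I anticipate is the piecewise-linearity claim in (iii): parts (i) and (ii) are essentially Minkowski's theorems plus a comparison of boxes, but for (iii) one must control the a priori infinite family of lattice vectors, show it reduces to a finite relevant set on compacta (which is precisely where the upper bound from (i) is used), and then verify that the resulting finite min--max of convex piecewise-linear functions is itself piecewise linear. The continuity half of (iii) is comparatively routine once the uniform Lipschitz estimate is noted.
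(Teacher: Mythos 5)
Your proof is correct and follows essentially the same route as the paper: the box inclusions between $\cB$ and dilates of $\cB_{\pmb\tau}$ for (i), Minkowski's second theorem for (ii), and the explicit piecewise-linear functions $L_{\vec v}(\pmb\tau)=\max_i\big(\log|v_i|-\tau_i\big)$ whose minimum gives $L_i$ for (iii). The only difference is that you supply the local-finiteness argument on compacta (using the upper bound from (i) to confine the relevant lattice vectors to a bounded box), which justifies the piecewise linearity of a minimum over an a priori infinite family --- a point the paper's proof passes over in silence.
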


\begin{proof}
The inequalities $L_1(\pmb\tau)\leq\ldots\leq L_d(\pmb\tau)$ 
follow immediately from the definition of successive minima. The leftmost and the rightmost inequalities follow from the inclusions
\[e^{-|\pmb\tau|_+}\cB_{\pmb\tau}
  \subset\cB\subset
  e^{|\pmb\tau|_-}\cB_{\pmb\tau}.\]

Statement \textup{(ii)} follows from Minkowski's second theorem, which states that
\[\frac1{d!}\leq\prod_{1\leq i\leq d}\lambda_i(\cB_{\pmb\tau})\leq1.\]

Let us prove \textup{(iii)}. For each nonzero $\vec v\in\La$ let us denote by $\lambda_{\vec v}(\cB_{\pmb\tau})$ the infimum of positive $\lambda$ such that $\lambda\cB_{\pmb\tau}$ contains $\vec v$, and set
\[L_{\vec v}(\pmb\tau)=\log\big(\lambda_{\vec v}(\cB_{\pmb\tau})\big).\]
If $\vec v=(v_1,\ldots,v_d)$, then
\[\lambda_{\vec v}(\cB_{\pmb\tau})=\max_{1\leq i\leq d}(|v_i|e^{-\tau_i}),\]
and
\[L_{\vec v}(\pmb\tau)=\max_{1\leq i\leq d}\big(\log|v_i|-\tau_i\big),\]
i.e. $L_{\vec v}(\pmb\tau)$ is continuous and piecewise linear. Notice that for each $\pmb\tau$ and each $i=1,\ldots,d$ there is a $\vec v=\vec v(\pmb\tau,i)\in\La$ such that $\lambda_i(\cB_{\pmb\tau})=\lambda_{\vec v}(\cB_{\pmb\tau})$. Hence, denoting
\[\La_i=\Big\{ \vec v\in\La\, \Big|\, \exists\,\pmb\tau:\,\lambda_i(\cB_{\pmb\tau})\leq\lambda_{\vec v}(\cB_{\pmb\tau}) \Big\},\]
we get
\[L_i(\pmb\tau)=\min_{\vec v\in\La_i}L_{\vec v}(\pmb\tau).\]
Thus, $L_i(\pmb\tau)$ is indeed continuous and piecewise linear.
\end{proof}

\subsection{Schmidt--Summerer exponents} 


\begin{definition}
  We define the \emph{Schmidt--Summerer lower and upper exponents of the first type} as
  \[\bpsi_i(\La)=\liminf_{|\pmb\tau|\to\infty}\frac{L_i(\pmb\tau)}{|\pmb\tau|_+},
    \qquad
    \apsi_i(\La)=\limsup_{|\pmb\tau|\to\infty}\frac{L_i(\pmb\tau)}{|\pmb\tau|_+},\]
  and \emph{of the second type} as
  \[\bPsi_i(\La)=\liminf_{|\pmb\tau|\to\infty}\sum_{1\leq j\leq i}\frac{L_j(\pmb\tau)}{|\pmb\tau|_+},
    \qquad
    \aPsi_i(\La)=\limsup_{|\pmb\tau|\to\infty}\sum_{1\leq j\leq i}\frac{L_j(\pmb\tau)}{|\pmb\tau|_+}.\]
\end{definition}

The following inequalities are a straightforward corollary of Proposition \ref{prop:properties_of_L_i}:
\[
\begin{array}{c}
  -1\leq\bpsi_1(\La)\leq\ldots\leq\bpsi_d(\La)\leq d-1,\vphantom{} \\ 
  -1\leq\apsi_1(\La)\leq\ldots\leq\apsi_d(\La)\leq d-1,\vphantom{\bigg|} \\ \ \ \,
  \bPsi_d(\La)=\aPsi_d(\La)=0.
\end{array}
\]

Schmidt--Summerer exponents of lattices are, in a sense, global characteristics, whereas we could consider a one-parametric path $\big\{\pmb\tau(s)\,\big|\,s\in\R_+\big\}$ and the corresponding $\liminf$'s and $\limsup$'s as $s\to\infty$. This is performed in \cite{german_AA_2012} for the path defined by
\[\tau_1(s)=\ldots=\tau_m(s)=s,\quad\tau_{m+1}(s)=\ldots=\tau_d(s)=-ms/n,\]
which corresponds to the problem of simultaneous approximation of zero with the values of $n$ linear forms in $m$ variables, $n+m=d$. In that case Schmidt--Summerer exponents can be expressed in terms of \emph{intermediate Diophantine exponents} (see \cite{german_AA_2012}). In the current setting we have a similar situation: the exponents $\bpsi_1(\La)$ and $\omega(\La)$ are but two different points of view at the same phenomenon.

\begin{proposition} \label{prop:omega_vs_psi}
  $\omega(\La)^{-1}+\bpsi_1(\La)^{-1}+1=0$.
\end{proposition}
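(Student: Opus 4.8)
The plan is to extract both $\omega(\La)$ and $\bpsi_1(\La)$ from the single family of functions
\[
  L_{\vec v}(\pmb\tau)=\max_{1\le i\le d}\bigl(\log|v_i|-\tau_i\bigr),
  \qquad
  L_1(\pmb\tau)=\min_{\vec v\in\La\backslash\{\vec 0\}}L_{\vec v}(\pmb\tau),
\]
and to compare them through two elementary inequalities, valid for every nonzero $\vec v\in\La$ with nonzero coordinates and every admissible $\pmb\tau$ (i.e.\ $\sum_i\tau_i=0$). Since the arithmetic mean does not exceed the maximum,
\[
  \log\Pi(\vec v)=\tfrac1d\sum_{i=1}^d\bigl(\log|v_i|-\tau_i\bigr)\le L_{\vec v}(\pmb\tau),
\]
while $\log|v_i|\le L_{\vec v}(\pmb\tau)+\tau_i\le L_{\vec v}(\pmb\tau)+|\pmb\tau|_+$ for every $i$ gives
\[
  \log|\vec v|\le L_{\vec v}(\pmb\tau)+|\pmb\tau|_+ .
\]
Both become equalities at the balanced point $\tau_i=\log|v_i|-\log\Pi(\vec v)$, where $L_{\vec v}(\pmb\tau)=\log\Pi(\vec v)$ and $|\pmb\tau|_+=\log|\vec v|-\log\Pi(\vec v)$. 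Writing $\gamma(\vec v)=\log\bigl(\Pi(\vec v)^{-1}\bigr)/\log|\vec v|$, so that $\omega(\La)=\limsup_{|\vec v|\to\infty}\gamma(\vec v)$ by \eqref{eq:omega_as_limsup}, at this point $\log\Pi(\vec v)/|\pmb\tau|_+=\phi(\gamma(\vec v))$ with $\phi(t)=-t/(1+t)$, a strictly decreasing function of $t>-1$. The proposition is precisely the identity $\bpsi_1(\La)=\phi(\omega(\La))$, which for $0<\omega(\La)<\infty$ reads $\omega(\La)^{-1}+\bpsi_1(\La)^{-1}+1=0$.

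For the bound $\bpsi_1(\La)\le\phi(\omega(\La))$ I would pick $(\vec v_k)\subset\La$ with $|\vec v_k|\to\infty$ and $\gamma(\vec v_k)\to\omega(\La)$, and attach to each the balanced $\pmb\tau^{(k)}$. Then $|\pmb\tau^{(k)}|_+=(1+\gamma(\vec v_k))\log|\vec v_k|\to\infty$, and $L_1(\pmb\tau^{(k)})\le L_{\vec v_k}(\pmb\tau^{(k)})=\log\Pi(\vec v_k)$ yields
\[
  \frac{L_1(\pmb\tau^{(k)})}{|\pmb\tau^{(k)}|_+}\le\phi(\gamma(\vec v_k))\longrightarrow\phi(\omega(\La)),
\]
so the global $\liminf$ defining $\bpsi_1(\La)$ is at most $\phi(\omega(\La))$.

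For the reverse bound $\bpsi_1(\La)\ge\phi(\omega(\La))$ I would take an arbitrary $\pmb\tau$ with $|\pmb\tau|$ large and let $\vec v$ realize $L_1(\pmb\tau)=L_{\vec v}(\pmb\tau)$. In the only regime relevant to the $\liminf$, namely $L_1(\pmb\tau)<0$ and $L_1(\pmb\tau)+|\pmb\tau|_+>0$, the two displayed inequalities give $\gamma(\vec v)\ge -L_1(\pmb\tau)/(L_1(\pmb\tau)+|\pmb\tau|_+)$. Combining this with $\gamma(\vec v)\le\omega(\La)+\e$ and solving the resulting linear inequality for the ratio produces $L_1(\pmb\tau)/|\pmb\tau|_+\ge\phi(\omega(\La)+\e)$; taking the $\liminf$ and then letting $\e\to0$, and noting that in the opposite regime $\bpsi_1(\La)\ge0\ge\phi(\omega(\La))$ holds trivially, finishes this direction.

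The hard part is justifying the hypothesis ``$|\vec v|$ large'' in the last step, i.e.\ that the realizing vectors escape to infinity along sequences with $L_1(\pmb\tau)\to-\infty$. I would argue that a bounded nonzero lattice vector takes only finitely many values, and that for a fixed $\vec v_0$ with all coordinates nonzero one has $L_{\vec v_0}(\pmb\tau)\ge|\pmb\tau|_--O(1)\to+\infty$, contradicting $L_1(\pmb\tau)\to-\infty$; hence $|\vec v|\to\infty$ and the bound $\gamma(\vec v)\le\omega(\La)+\e$ applies. The genuine exception is a lattice point in a coordinate plane: there $\Pi$ vanishes on a whole line, so $\omega(\La)=+\infty$, while driving the coordinates of $\pmb\tau$ off the support of that point to $+\infty$ sends $L_1(\pmb\tau)$ to $-\infty$ with $L_1(\pmb\tau)/|\pmb\tau|_+\to-1$, giving $\bpsi_1(\La)=-1$, in agreement with the boundary reading of the formula and with Proposition~\ref{prop:properties_of_L_i}(i). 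The endpoint $\omega(\La)=0$ (forcing $\bpsi_1(\La)=0$) is treated the same way, and for $0<\omega(\La)<\infty$ the two inequalities combine to the asserted identity.
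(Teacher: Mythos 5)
Your proof is correct and takes essentially the same route as the paper's: both hinge on evaluating $L_{\vec v}$ at the balanced point $\pmb\tau(\vec v)$, where $L_{\vec v}(\pmb\tau(\vec v))=\log\Pi(\vec v)$ and $|\pmb\tau(\vec v)|_+=\log|\vec v|-\log\Pi(\vec v)$, your two elementary inequalities merely supplying the justification for the equality of $\liminf$'s that the paper writes down without comment. One small slip in the degenerate case: if $\vec v_0\in\La$ lies in a coordinate plane, you must drive the coordinates of $\pmb\tau$ \emph{on} the support of $\vec v_0$ to $+\infty$ (the off-support ones then going to $-\infty$) in order to force $L_{\vec v_0}(\pmb\tau)\to-\infty$; pushing the off-support coordinates to $+\infty$, as written, sends $L_{\vec v_0}(\pmb\tau)$ to $+\infty$ instead, although the conclusion $\bpsi_1(\La)=-1$ in that case is still correct.
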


\begin{proof}
  For each $\vec v\in\La$ let us set
  \[\pmb\tau(\vec v)=\Big(\log\big(|v_1|\big/\Pi(\vec v)\big),\ldots,\log\big(|v_d|\big/\Pi(\vec v)\big)\Big).\]
  Then 
  \[
  \begin{array}{l}
    \lambda_{\vec v}(\cB_{\pmb\tau(\vec v)})=\Pi(\vec v), \\ \vphantom{\bigg|}
    L_{\vec v}(\pmb\tau(\vec v))=\log(\Pi(\vec v)), \\
    |\pmb\tau(\vec v)|_+=\log|\vec v|-\log(\Pi(\vec v)).
  \end{array}
  \]
  Hence
  \begin{multline*}
    \bpsi_1(\La)=
    \liminf_{|\pmb\tau|\to\infty}\frac{L_1(\pmb\tau)}{|\pmb\tau|_+}=
    \liminf_{|\pmb\tau|\to\infty}\frac{\min_{\vec v\in\La_1}L_{\vec v}(\pmb\tau)}{|\pmb\tau|_+}=
    \liminf_{|\pmb\tau|\to\infty}\frac{\min_{\vec v\in\La}L_{\vec v}(\pmb\tau)}{|\pmb\tau|_+}=\vphantom{\Bigg|} \\ =
    \liminf_{\substack{\vec v\in\La \\ |\vec v|\to\infty}}\frac{L_{\vec v}(\pmb\tau(\vec v))}{|\pmb\tau(\vec v)|_+}=
    \liminf_{\substack{\vec v\in\La \\ |\vec v|\to\infty}}\frac{\log(\Pi(\vec v))}{\log|\vec v|-\log(\Pi(\vec v))}= \\ =
    -\Bigg(1+\Bigg(\limsup_{\substack{\vec v\in\La \\ |\vec v|\to\infty}}\frac{\log\big(\Pi(\vec v)^{-1}\big)}{\log|\vec v|}\Bigg)^{-1}\Bigg)^{-1}=
    -\Big(1+\omega(\La)^{-1}\Big)^{-1}.
  \end{multline*}
\end{proof}

Proposition \ref{prop:omega_vs_psi} allows reformulating statements concerning Diophantine exponents of lattices in terms of Schmidt--Summerer exponents. Let us reformulate Problems \ref{pr:ray}, \ref{pr:lattice_transference} and Theorem \ref{t:lattice_transference}.

\begin{problem}[Reformulation of Problem \ref{pr:ray}] \label{pr:ray_schmimmerered}
  Prove that $\bpsi_1(\La)$ can attain any value in the interval $[-1,0]$.
\end{problem}

\begin{theorem}[Reformulation of Theorem \ref{t:lattice_transference}] \label{t:lattice_transference_schmimmerered}
  \begin{equation} \label{eq:lattice_transference_schmimmerered}
    \bpsi_1(\La)\leq\frac{\bpsi_1(\La^\ast)}{(d-1)^2}.
  \end{equation}
\end{theorem}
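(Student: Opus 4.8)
The plan is to reduce this theorem to the already-established transference inequality \eqref{eq:lattice_transference} by using the dictionary between lattice exponents and Schmidt--Summerer exponents provided by Proposition \ref{prop:omega_vs_psi}. The key observation is that \eqref{eq:lattice_transference_schmimmerered} is stated to be a \emph{reformulation} of Theorem \ref{t:lattice_transference}, so the honest task is not to reprove transference from scratch but to verify that, under the substitution dictated by Proposition \ref{prop:omega_vs_psi}, inequality \eqref{eq:lattice_transference} becomes exactly \eqref{eq:lattice_transference_schmimmerered}. First I would record the relation from Proposition \ref{prop:omega_vs_psi} in solved form: writing $\psi=\bpsi_1(\La)$ and $\psi^\ast=\bpsi_1(\La^\ast)$ together with $\omega=\omega(\La)$, $\omega^\ast=\omega(\La^\ast)$, the identity $\omega^{-1}+\psi^{-1}+1=0$ gives the two equivalent forms
\[
  \psi=-\frac{\omega}{1+\omega},
  \qquad
  \omega=-\frac{\psi}{1+\psi},
\]
and the analogous pair for the dual lattice.

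Second, I would substitute these expressions into the known transference bound \eqref{eq:lattice_transference}, namely
\[
  \omega\geq\frac{\omega^\ast}{(d-1)^2+d(d-2)\,\omega^\ast}.
\]
Replacing $\omega$ by $-\psi/(1+\psi)$ and $\omega^\ast$ by $-\psi^\ast/(1+\psi^\ast)$ and simplifying the resulting rational expression is the computational heart of the argument. The plan is to clear denominators carefully, keeping track of signs (note that $\psi,\psi^\ast\in[-1,0]$, so $1+\psi\geq 0$ and $-\psi\geq 0$, which controls the direction of inequalities when multiplying through). After cancellation I expect the $d(d-2)$ cross-term to disappear and the surviving inequality to collapse precisely to $\psi\leq\psi^\ast/(d-1)^2$, which is \eqref{eq:lattice_transference_schmimmerered}.

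Third, I would handle the boundary and degenerate cases so that the monotone substitution above is legitimate throughout. The map $\omega\mapsto-\omega/(1+\omega)$ sends $[0,+\infty]$ bijectively and monotonically onto $[-1,0]$, with $\omega=0\mapsto\psi=0$ and $\omega=+\infty\mapsto\psi=-1$; I would check that the extremal clause of Theorem \ref{t:lattice_transference} (the case $\omega(\La^\ast)=\infty$ giving $\omega(\La)\geq 1/(d(d-2))$) translates consistently, i.e. that $\psi^\ast=-1$ yields via \eqref{eq:lattice_transference_schmimmerered} the bound $\psi\leq -1/(d-1)^2$, matching the image of $1/(d(d-2))$ under the substitution. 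The main obstacle is not conceptual but algebraic: the monotonicity of the Möbius-type substitution means the inequality direction is preserved, but one must verify that the denominators arising after substitution are genuinely nonnegative on the admissible range, since a sign error there would flip the inequality. Once the positivity of each cleared factor is confirmed on $\psi,\psi^\ast\in[-1,0]$, the equivalence of \eqref{eq:lattice_transference} and \eqref{eq:lattice_transference_schmimmerered} follows, and the theorem is proved.
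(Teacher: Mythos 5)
Your proposal is correct and matches the paper's intended argument: the theorem really is obtained by substituting $\omega=-\psi/(1+\psi)$ (from Proposition \ref{prop:omega_vs_psi}) into \eqref{eq:lattice_transference}, and after clearing denominators the cross-term does cancel because $(d-1)^2(1+\psi^\ast)-d(d-2)\psi^\ast=(d-1)^2+\psi^\ast$, leaving exactly $\psi\leq\psi^\ast/(d-1)^2$. The one small slip is that the M\"obius substitution $\omega\mapsto-\omega/(1+\omega)$ is monotone \emph{decreasing} on $[0,+\infty]$, so it reverses rather than preserves inequalities --- which is precisely why the $\geq$ of \eqref{eq:lattice_transference} turns into the $\leq$ of \eqref{eq:lattice_transference_schmimmerered}; your check of the boundary case $\omega(\La^\ast)=\infty$, i.e. $\bpsi_1(\La^\ast)=-1$, is consistent with this.
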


\begin{problem}[Reformulation of Problem \ref{pr:lattice_transference}]
  Is it true that the set of all possible values of $\big(\bpsi_1(\La),\bpsi_1(\La^\ast)\big)$ coincides with
  \[\left\{\big(x,y)\in[-1,0]^2\,\middle|\,
    (d-1)^2x\leq y\leq\frac{x}{(d-1)^2}
    \right\}\ ?\]
\end{problem}

Inequality \eqref{eq:lattice_transference_schmimmerered} stimulates the following natural question in the spirit of the papers \cite{schmidt_annals_1967}, \cite{laurent_up_down}, \cite{german_AA_2012}.

\begin{problem} \label{pr:ray_schmimmerered_split}
  Split the inequality \eqref{eq:lattice_transference_schmimmerered} into a chain of inequalities between the $\bpsi_i(\La)$ or $\bPsi_i(\La)$.
\end{problem}

And of course, we cannot omit the following most challenging question.

\begin{problem} \label{pr:roy}
  Prove an analogue of Roy's theorem on rigid systems (see \cite{roy_annals_2015}) for the functions $L_1(\pmb\tau),\ldots,L_d(\pmb\tau)$.
\end{problem}

\paragraph{Acknowledgements.}

The author is grateful to La Trobe University and Sydney University, and personally to Mumtaz Hussain and Dzmitry Badziahin, for warm hospitality and fruitful discussions.

The author is also a Young Russian Mathematics award winner and would like to thank its sponsors and jury.

\end{document}